\newif\ifcomments\commentsfalse
\newtheorem{lemma}{Lemma}[section]
\newtheorem{proposition}[lemma]{Proposition}
\newtheorem{theorem}[lemma]{Theorem}
\newtheorem{corollary}[lemma]{Corollary}
\newtheorem{conjecture}[lemma]{Conjecture}
\newtheorem{example}[lemma]{Example}
\newtheorem{definition}[lemma]{Definition}
\newtheorem{convention}[lemma]{Convention}
\newtheorem{question}[lemma]{Question}
\newcommand{\BB}{\mathbb B}
\newcommand{\NN}{\mathbb N}
\newcommand{\RR}{\mathbb R}
\renewcommand{\SS}{\mathbb S}
\newcommand{\ZZ}{\mathbb Z}
\newcommand{\cE}{\mathcal E}
\newcommand{\cF}{\mathcal F}
\newcommand{\cP}{\mathcal P}
\renewcommand{\Bar}{\overline}
\newcommand{\imic}{\cong}
\newcommand{\inj}{\hookrightarrow}
\renewcommand{\Tilde}{\widetilde}
\newcommand{\pr}{\mathop{\mathrm {pr}}\nolimits}
\newcommand{\bE}{\mathbf E}
\newcommand{\bF}{\mathbf F}
\newcommand{\Int}{\mathop{\mathrm {Int}}\nolimits}
\newenvironment{proof}{\paragraph{Proof:}}{\hfill$\square$\smallskip}
\begin{document}
\title{Regular cylindrical algebraic decomposition}
\author{J.H.~Davenport, A.F.~Locatelli \& G.K.~Sankaran\\
Department of Mathematical Sciences,\\ University of Bath,\\ Bath
BA2~7AY, UK
\\
{\tt J.H.Davenport@bath.ac.uk}\\ {\tt
  acyr.locatelli86@gmail.com}\\ {\tt G.K.Sankaran@bath.ac.uk}
}
\maketitle

Abstract: We show that a strong well-based cylindrical algebraic
decomposition $\cP$ of a bounded semi-algebraic set is a regular cell
decomposition, in any dimension and independently of the method by
which $\cP$ is constructed. Being well-based is a global condition on
$\cP$ that holds for the output of many widely used algorithms. We
also show the same for $S$ of dimension $\le 3$ and $\cP$ a strong
cylindrical algebraic decomposition that is locally boundary simply
connected: this is a purely local extra condition.
\smallskip

\section{Introduction}\label{sect:introduction}
Cylindrical algebraic decomposition (abbreviated {c.a.d.}; see
Definition~\ref{def:cad}) is a method that decomposes a semi-algebraic
subset $S\subseteq\RR^n$ into simpler pieces (cells) in a systematic way. It
first arose~\cite{collins} in the context of quantifier elimination,
but has since become a useful technique for effective computation of
topological invariants, such as homology groups, of semi-algebraic
sets. For example, the piano movers' problem (see, inter alia,
\cite{schwartz1983}) asks whether the configuration space of
allowable positions of an object in a subset of $\RR^3$ is
connected. Questions of this nature can have both theoretical and
practical importance.
\footnote{Some of the results of this paper formed part of the Bath
  Ph.D.\ thesis~\cite{locatelli} of the second author, which was
  funded by the University of Bath. We acknowledge discussions with
  Matthew England and David Wilson, and EPSRC grant EP/J003247/1 which
  funded them.  GKS thanks Andrew Ranicki and Kenichi Ohshika for
  education about cobordism.}

Cell decompositions can be quite pathological, however, and for
purposes of computation (again, both theoretical and practical) some
further conditions are needed. One would hope, at least, to obtain a
representation of $S$ as a CW-complex: better still, a regular cell
complex.

\begin{question}\label{qu:main}
Let $S \subset \RR^n$ be a closed and bounded semi-algebraic set.
\begin{enumerate}
\item[(i)] Can we find a c.a.d.\ of $S$ into regular cells?
\item[(ii)] Given a c.a.d.\ of $S$, can we tell easily whether it is a
  regular cell decomposition?
\end{enumerate}
\end{question}

A partial answer to Question~\ref{qu:main}(i) was given in
\cite{basu2014}, where it is shown that the bounded cells
of a semi-monotone c.a.d.\ (see~\cite{basu2013}) are regular, and an
algorithm is given to construct such a c.a.d.\ if $\dim S\le 2$ or
$n=3$. However, being semi-monotone is a strong condition and it is
not at present clear whether semi-monotone c.a.d.{s} exist at all in
general. Even if they do, they are likely to be laborious to construct
and to have many cells, making them unsuitable for computation.

We can always find a c.a.d.\ of $S$ with regular cells if we allow a
change of coordinates, but this is usually undesirable.  From a
computational point of view, implementations of c.a.d.\ algorithms
often improve run time by exploiting sparseness, which is destroyed by
change of coordinates. Quantifier elimination, the original motivating
example for c.a.d.\ in~\cite{collins}, does not allow arbitrary
changes of coordinates, and indeed requires some ordering on the
coordinates: we assume, as is usual in c.a.d.\ theory, a total order
$x_1\prec x_2\prec\cdots\prec x_n$. Thus it is important to understand
which c.a.d.s have good properties such as giving regular cell
decompositions.

For these reasons the earlier study of the topological properties of
c.a.d.s in \cite{lazard2010} remains very relevant to
Question~\ref{qu:main}. Lazard describes some much weaker conditions
and conjectures (Conjecture~\ref{conj:strongcadisreg}, below) that a
c.a.d.\ satisfying them will have regular cells, and also shows how to
construct these c.a.d.s for $n=3$.

Much earlier, Schwartz and Sharir~\cite{schwartz1983} had proved that
a c.a.d.\ produced by Collins' algorithm~\cite{collins}, the only
method known at that time, gives a regular cell complex provided it is
well-based (see Definition~\ref{def:wellbased}).

In Section~\ref{subsect:Finvt} of this paper we prove that any
well-based strong {c.a.d}\ gives a regular cell complex: see
Theorem~\ref{thm:wellbasedstronggivesreg} for the precise statement
and Definition~\ref{def:strongcad} for the meaning of ``strong''. A
well-based c.a.d.\ produced by Collins' algorithm is always strong, so
this is a generalisation of the result of Schwartz and Sharir, but it
is entirely independent of the method used to construct the c.a.d. and
is thus more widely applicable.

In Section~\ref{subsect:topology}, we prove a slightly weaker form of
Lazard's conjecture for $\dim S=2$ or $n=3$: see
Theorem~\ref{thm:vstrongcadisreg}. Our methods also suggest a strategy
for $n\ge 4$.

These two results are superficially similar but quite different in
detail. In Section~\ref{subsect:Finvt} we consider a c.a.d.\ $\cP$
that is $\bF$-invariant (see Definition~\ref{def:Finvariantcad}) for a
large set of polynomials $\bF\subset \RR[x_1,\ldots,x_n]$, including
as a minimum all the polynomials that are used to define~$S$. (Indeed,
the term well-based itself already presumes that $\cP$ is
$\bF$-invariant.) In one respect, this restriction is not onerous:
algorithms commonly do produce $\bF$-invariant c.a.d.s by
construction. On the one hand, $\bF$-invariance is a very strong
global condition, which may force $\cP$ to have many cells even far
from~$S$ and cannot be checked locally near each cell.

By contrast, in Section~\ref{subsect:topology} we are concerned with
the topology of c.a.d.s in general, subject only to local conditions.
Apart from its theoretical interest, this is potentially important in
the context of Brown's NuCAD algorithm \cite{Brown2015}, which
constructs cells that are capable of being cells in {c.a.d.}s, rather
than complete {c.a.d.}s, and is thus inherently local: global
conditions such as $\bF$-invariance do not arise.

This difference is also reflected in the methods of proof of
Theorem~\ref{thm:wellbasedstronggivesreg} and
Theorem~\ref{thm:vstrongcadisreg}. For
Theorem~\ref{thm:wellbasedstronggivesreg}, we use largely elementary
methods of real algebraic geometry, exploiting the rigidity imposed by
the $\bF$-invariance. The tools used to prove
Theorem~\ref{thm:vstrongcadisreg} are topological and are anything but
elementary as they include the $h$-cobordism theorem (in effect, the
Poincar\'e Conjecture).

Some of the statements make sense over an arbitrary real closed field,
but as our methods are in part topological we work over $\RR$
throughout. See~\cite{demdah} for an approach to $h$-cobordism in the
context of real closed fields, which could possibly allow one to
remove this restriction.

A subsidiary aim of this paper is to give some consistent terminology
for ideas that have appeared in different parts of the literature
under various, sometimes incompatible, names. We try to do this in the
course of Section~\ref{sect:celldecompositions}, which explains the
background to the problems. The main results are found in
Section~\ref{sect:cad}. Finally, in Section~\ref{sect:ordercomplex},
we make some brief observations about another question
(Question~\ref{qu:lazardrevised}) raised by Lazard in the same
paper~\cite{lazard2010}.

\section{Cells and cell decompositions}\label{sect:celldecompositions}

Throughout the rest of the paper, we use $\BB(p,\varepsilon)$ and
$\SS(p,\varepsilon)$ to denote the open ball and the sphere,
respectively, in $\RR^n$ with centre $p$ and radius $\varepsilon$ (the
dimension will always be clear): we use $\BB^n$ and $\SS^{n-1}$ for
the standard unit ball and sphere in $\RR^n$. If $X\subset \RR^n$ then
$\Bar X$ denotes the closure of $X$ in the Euclidean topology.

We begin with a well-known example, which motivates
Question~\ref{qu:main}.

\begin{example}\label{ex:whitney}
Put $\Delta=\{(x,y) \in \RR^2 \mid 0 < x < 1,\, -x < y < x\}$ and
consider the semi-algebraic set
\[
W=\{(x,y,z) \in \Delta\times \RR \subset \RR^3 \mid x^2z=y^2 \},
\]
a subset of the Whitney umbrella $\{(x,y,z) \in \RR^3 \mid  x^2z-y^2=0 \}$.
\begin{center}
  \begin{tikzpicture}[scale=2]

\draw[dotted](0,-1)--(0,1.5);
\draw[dotted](-1,0)--(1.5,0);
\draw[blue, fill=green](0,0)--(0,1)--(0.7,0.8) to [out=290,
  in=180](1,0) -- cycle;
\draw[green](0,0)--(1,0);
\draw[blue, fill=green!50](0.7,0.8) to [out=270,
  in=180](1,0) to [out=0, in=270](1.3,1.2) -- (0,1) -- cycle;
\draw[red, fill = red] (0,0) circle [radius = 0.025];
\draw[red, fill = red] (0,1) circle [radius = 0.025];
\draw[red, fill = red] (0.7,0.8) circle [radius = 0.025];
\draw[red, fill = red] (1.3,1.2) circle [radius = 0.025];
\draw[dotted](-0.3,-0.5)--(0.9,1.5);
\node [below] at (0,-1.3){\rm Example~\ref{ex:whitney}: a CW complex that
    is not regular};
\end{tikzpicture}
\end{center}
We can partition $\Bar{W}$ into nine disjoint cells: the corners
$(0,0,0)$, $(0,0,1)$, $(1,1,1)$ and $(1,-1,1)$; the edges
$\{(0,0,t)\}$, $\{(t,t,1)\}$ and $\{(t,-t,1)\}$ (for $0<t<1$) together
with $\{(1,t,t^2)\}$ (for $-1<t<1$); and $W$.
\end{example}

This cell decomposition is a c.a.d.\ of $\Bar{W}$ and represents
$\Bar{W}$ as a CW complex~\cite[pp.~5\ \&\ 519]{hatcher2002} but not
as a regular cell complex (see Definition~\ref{def:regularcell}).

\subsection{Definitions and terminology}\label{subsect:terminology}

Next, we collect some definitions. This is already not quite trivial,
because the same or very similar conditions have been introduced by
several authors at different times under very different, and sometimes
incompatible, names. Before doing any mathematics at all, we propose
some terminology which we believe is consistent, flexible and
memorable.

\begin{definition}\label{def:cell}
A subset $C$ of $\RR^n$ is a \emph{$d$-cell}, for $d\in\NN_0$, if
there exists a homeomorphism $\BB^d \to C$, for some $d \in \NN_0$
called the \emph{dimension} $\dim C$ of $C$.  The \emph{boundary of
$C\subset\RR^n$} (sometimes, for emphasis, the \emph{cell boundary})
is $\partial_c C=\Bar{C} \setminus C$.
\end{definition}

The cell boundary of a cell $C$ does not, in general, coincide with
the topological boundary of $C$, which is $\Bar{C} \setminus
\Int(C)$. Also, $\Bar{C}$ may have a structure of manifold with
boundary in which the manifold boundary $\partial_m\Bar{C}$ (see
Definition~\ref{def:mfldwbdy}) might coincide with neither $\partial_c
C$ nor the topological boundary.

\begin{definition}\label{def:celldecomposition}
Let $X$ be a subset of $\RR^n$. A \emph{cell decomposition of $X$} is
a partition $\cP=\{C_\alpha\}$ of $X$ into disjoint cells.
\end{definition}

Even in $\RR^2$ a cell may have bad boundary: for instance, if we take
$C=\{(x,y)\in\RR^2\mid y\cos x<1\}$ then $\partial_c C$ has infinitely
many connected components.  We define below some desirable conditions
on a cell $C$ and its boundary. Some of these conditions are intrinsic
to $C$; others are related to a cell decomposition.

Recall that if $X\subseteq X'$ and $Y\subseteq Y'$ are inclusions of
topological spaces, a homeomorphism $\varphi\colon (X',X)\to (Y',Y)$
is a homeomorphism $\varphi\colon X'\to Y'$ such that $\varphi|_X$ is
a homeomorphism $X\to Y$.

\begin{definition}\label{def:regularcell}
We say that two subsets $X\subset \RR^n$ and $Y\subset \RR^m$ are
\emph{equiregular} if there exists a homeomorphism $\varphi\colon
(\Bar{X},X)\to (\Bar{Y},Y)$. A $d$-cell $C\subset\RR^n$ is said to be
a \emph{regular cell} if $C$ is equiregular with $\BB^d$.
\end{definition}

The hypercube $(0,1)^d\subset \RR^n$ is a regular cell. On the other
hand, the cell $\BB^2 \setminus \{(0,y) \in \RR^2 \mid y \geq 0\}$ is
not regular, even though its closure is $\Bar{\BB}^2$.  Moreover, even
if $C$ is regular, a particular homeomorphism $\BB\to C$ need not
extend to $\Bar{\BB}\to \Bar{C}$ even as a continuous map.

We establish a convention for naming cell decompositions where all
cells have a certain property.

\begin{convention}\label{conv:decompositionnames}
If $\Pi$ is a property of cells we shall say that $\cP$ is a $\Pi$
cell decomposition if $\cP$ is a cell decomposition and all cells of
$\cP$ satisfy~$\Pi$.
\end{convention}
Thus a regular cell decomposition is a decomposition into regular
cells. Not every property of $\cP$ can be checked on the cells,
however: for example, a finite cell decomposition is simply a
decomposition into finitely many cells. There is no ambiguity, because
finiteness is not a property of cells.

\begin{definition}\label{def:adjacent}
We say that two cells $C$ and $D$ in $\RR^n$ are \emph{adjacent} if
either one intersects the closure of the other. We say that $C$
is \emph{subadjacent} to $D$, written $C\preceq D$, if
$C \cap \Bar{D}\neq \emptyset$.
\end{definition}

If $D\preceq C$ and $D\cap C=\emptyset$, then by
\cite[Theorem~5.42]{basuroypollack}, $\dim D \leq \dim C - 1$.

Now we define some extrinsic properties of a cell, in relation to a
cell decomposition.

\begin{definition}\label{def:closurefinite}
Let $C$ be a cell of a cell decomposition $\cP$. We say that $C$ is
\emph{closure finite in $\cP$} if $\Bar{C}$ (or, equivalently,
$\partial_c C$) is the union of finitely many cells of $\cP$.
\end{definition}

This condition is found in the literature under different names. It is
called \emph{boundary coherent} in \cite[Definition~2.7]{lazard2010}:
elsewhere, sometimes without the finiteness requirement, it is called
the \emph{frontier condition}.  We use the term {\it closure finite}
as it is more descriptive than either of the terms above, and is the
usual term in the topology literature. Indeed, it is the meaning of
the ``C'' of ``CW complex'': see~\cite[p.~520]{hatcher2002}.

\subsection{Examples and basic properties}\label{subsect:examples}

We first illustrate some relations among the properties introduced in
Section~\ref{subsect:terminology}.

\begin{example}\label{ex:noncf}
Consider the following cell decomposition of $[0,1]^3$, in which the
end points $(0,0,1/2)$ and $(0,1,1/2)$ are not cells, and the edges
$(0,0,z)$ and $(0,1,z)$ ($z\in(0,1)$) are not sub-divided by these
points.
\begin{center}
\begin{tikzpicture}[scale=0.5]
\draw[blue](0,0) -- (0,3) -- (2,4.5) -- (4.7,4.5) -- (3,3) -- (3,0) --
cycle;
\draw[blue](0,3) -- (3,3);
\draw[blue](4.7,4.5) -- (4.7,2) -- (3,0);
\draw[dotted, blue](0,0) -- (2,2) -- (2,4.5);
\draw[dotted, blue](2,2) -- (4.7,2);
\draw[green](0,1.5)--(3,1.5);
\draw[red, fill=red](0,0) circle [radius = 0.04];
\draw[red, fill=red](0,3) circle [radius = 0.04];
\draw[red, fill=red](2,4.5) circle [radius = 0.04];
\draw[red, fill=red](4.7,4.5) circle [radius = 0.04];
\draw[red, fill=red](3,3) circle [radius = 0.04];
\draw[red, fill=red](3,0) circle [radius = 0.04];
\draw[red, fill=red](2,2) circle [radius = 0.04];
\draw[red, fill=red](4.7,2) circle [radius = 0.04];
\node[below] at (0,-0.3){\rm Example~\ref{ex:noncf}: subadjacency and closure finiteness I};
\end{tikzpicture}
\end{center}
The cube is closure finite; the $1$-cell
$[0,1]\times\{0\}\times \{\frac{1}{2}\}$, subadjacent to the cube, is
not closure finite.
\end{example}

Example~\ref{ex:noncf} also shows that even if $C$ is closure finite
in $\cP$ its boundary may contain a cell of $\cP$ that is not closure
finite.

\begin{example}\label{ex:cfsubadj}
The cells $C_1 = \SS^1 \setminus \{(x,y) \in \RR^2 \mid x \ge 0 \mbox{
and } y \le 0 \}$ and $C_1 = \{(x,0) \in \RR^2 \mid -1 < x < 2\}$ in
$\RR^2$ are not closure finite in $\cP=\{C_1,C_2\}$, and they are
subadjacent to each other.
\end{example}
\begin{center}
\begin{tikzpicture}
\draw[blue](-1,0)--(2,0);
\draw[red](1,0)arc(0:270:1);
\node [above] at (-1,0.5){$C_1$};
\node [below] at (1.5,0){$C_2$};
\node [below] at (0,-1.3){\rm Example~\ref{ex:cfsubadj}: subadjacency and closure finiteness II};
\end{tikzpicture}
\end{center}
\begin{lemma}\label{lem:cfbdysubadj}
Let $C$ be a cell of a cell decomposition $\cP$. Then $C$ is closure
finite if and only if $D \preceq C$ implies that
$D \subseteq \Bar{C}$. In particular, if two cells $C$ and $D$ of
$\cP$ are closure finite and subadjacent to each other, then $C = D$.
\end{lemma}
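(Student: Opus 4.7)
The plan is to prove the two directions of the equivalence separately and then obtain the ``in particular'' assertion from the equivalence together with the dimension inequality for disjoint subadjacent cells recalled just after Definition~\ref{def:adjacent}. The arguments are essentially set-theoretic, relying only on the pairwise disjointness of the cells of the partition $\cP$.

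For the forward direction I would assume $\Bar C = C\cup C_1\cup\cdots\cup C_k$ with each $C_i\in\cP$. Given any $D\preceq C$, a point $p\in D\cap\Bar C$ lies in one of the $C_i$ or in $C$; the pairwise disjointness of the cells of $\cP$ then forces $D$ to equal that cell, so $D\subseteq\Bar C$. For the converse, the partition property gives
\[
\Bar C \;=\; \bigsqcup_{E\in\cP}(E\cap\Bar C) \;=\; \bigsqcup_{E\preceq C} E,
\]
where the second equality is exactly the hypothesis, since each $E$ meeting $\Bar C$ is then contained in $\Bar C$. This expresses $\Bar C$ as a union of cells of $\cP$; the only delicate point is the finiteness clause in Definition~\ref{def:closurefinite}, which I would handle by noting that throughout this paper the cell decompositions $\cP$ under consideration are finite, so only finitely many $E\in\cP$ can satisfy $E\preceq C$.

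For the final assertion, assume $C$ and $D$ are closure finite with $C\preceq D$ and $D\preceq C$. The equivalence just proved gives $C\subseteq\Bar D$ and $D\subseteq\Bar C$. If, for contradiction, $C\neq D$, then disjointness of the cells of $\cP$ forces $C\cap D=\emptyset$, and applying \cite[Theorem~5.42]{basuroypollack} to each of the two subadjacencies yields $\dim C\le\dim D-1$ and $\dim D\le\dim C-1$, which is absurd; hence $C=D$. The main potential obstacle in the whole argument is the finiteness issue in the converse direction, but as noted above it is a bookkeeping point under the standing finiteness assumption on $\cP$, rather than a genuine mathematical difficulty.
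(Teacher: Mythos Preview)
Your proof is correct and follows essentially the same approach as the paper's: the forward direction identifies $D$ with one of the finitely many cells comprising $\Bar C$, and the converse writes $\Bar C$ as the union of the cells subadjacent to $C$. You are in fact more thorough than the paper, which neither addresses the finiteness clause in the converse nor spells out the ``in particular'' statement; your use of the dimension inequality quoted after Definition~\ref{def:adjacent} is exactly the intended route for the latter. One small caveat: there is no explicit standing finiteness hypothesis on $\cP$ in Section~\ref{sect:celldecompositions}, so your appeal to it is slightly informal---but the paper's own proof has the identical gap, and in every application (to c.a.d.s) $\cP$ is finite by definition.
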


\begin{proof}
Suppose that $C$ is closure finite in $\cP$; that is, $\partial_c C =
\bigcup_{i=1}^{k}D_i$. If $D \preceq C$ then it must intersect some
$D_i$, and thus $D = D_i$. In particular $D \subset \partial_c C$.
Conversely, if $\Bar{C}$ contains all cells subadjacent to $C$, then
$\Bar{C} = \bigcup_{D\preceq C} D$.
\end{proof}

\begin{definition}\label{def:wellbordered}
Let $C$ be a cell of a cell decomposition $\cP$. We say that $C$ is
\emph{well-bordered in $\cP$} if there is a finite collection
$\{C_i\}\subset\cP$ of cells of $\cP$ such that $\dim C_i = \dim C -
1$ and $\partial_c C = \bigcup_i \Bar{C_i}$.
\end{definition}

For instance a $2$-sphere minus a point is a cell that is not
well-bordered. See \cite[Example~2.9]{lazard2010} for more examples of
how a cell can fail to be well-bordered, and for a closure finite
decomposition that is not well-bordered.

Like closure finiteness, the well-bordered property does not
permeate to subadjacent cells.

\begin{example}\label{ex:wbnotcf}
Consider the cell decomposition that consists of the open cube
$(0,1)^3$, all six of its faces, eleven of its edges (not the
$z$-axis) and seven of its corners (not the origin), together with the
$1$-cell $\{0\}\times\{0\}\times(-1,1)$ and the $0$-cell
$(0,0,-1)$. We observe the following:
\begin{enumerate}
\item The cell $(0,1)^3$ is well-bordered but not closure finite.
\item the faces that are adjacent to the cell
$\{0\}\times\{0\}\times(-1,1)$ are neither well-bordered nor closure
finite.
\end{enumerate}
\end{example}
\begin{center}
\begin{tikzpicture}[scale=0.5]
\draw[blue](0,-3) -- (0,3) -- (2,4.5) -- (4.7,4.5) -- (3,3) -- (3,0) --
(0,0);
\draw[blue](0,3) -- (3,3);
\draw[blue](4.7,4.5) -- (4.7,2) -- (3,0);
\draw[dotted, blue](0,0) -- (2,2) -- (2,4.5);
\draw[dotted, blue](2,2) -- (4.7,2);
\draw[red, fill=red](0,3) circle [radius = 0.04];
\draw[red, fill=red](2,4.5) circle [radius = 0.04];
\draw[red, fill=red](4.7,4.5) circle [radius = 0.04];
\draw[red, fill=red](3,3) circle [radius = 0.04];
\draw[red, fill=red](3,0) circle [radius = 0.04];
\draw[red, fill=red](2,2) circle [radius = 0.04];
\draw[red, fill=red](4.7,2) circle [radius = 0.04];
\node[below] at (0,-3.5){\rm Example~\ref{ex:wbnotcf}: well-borderedness and closure finiteness};
\end{tikzpicture}
\end{center}

These two conditions are nevertheless related.

\begin{lemma}\label{lem:wbclosurecf}
Let $C$ be a cell of a cell decomposition $\cP$. If $C$ and all cells
subadjacent to $C$ are well-bordered, then $C$ is closure finite.
\end{lemma}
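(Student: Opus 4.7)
The plan is to prove the lemma by induction on $d = \dim C$, using the well-bordered decomposition of $\partial_c C$ as the induction step and exploiting the fact that the hypothesis ``every cell subadjacent to $C$ is well-bordered'' is inherited by the boundary cells.

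In the base case $d = 0$ the cell $C$ is a single point, so $\partial_c C = \emptyset$ and $\Bar{C} = C$ is trivially a finite union of cells of $\cP$. For the inductive step, I would use the well-borderedness of $C$ to fix a finite collection $\{C_i\}_{i \in I} \subset \cP$ with $\dim C_i = d-1$ and $\partial_c C = \bigcup_{i \in I} \Bar{C_i}$, so that
\[
\Bar{C} = C \cup \bigcup_{i \in I} \Bar{C_i}.
\]
The goal is then reduced to showing that each $\Bar{C_i}$ is itself a finite union of cells of $\cP$, i.e.\ that $C_i$ is closure finite.

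To apply the inductive hypothesis to each $C_i$, I would first observe that $C_i \subset \Bar{C_i} \subset \partial_c C \subset \Bar{C}$, so $C_i \preceq C$ and the hypothesis gives that $C_i$ is well-bordered. I would then check the transitivity of subadjacency into $C$: if $D \in \cP$ satisfies $D \preceq C_i$, then $D \cap \Bar{C_i} \neq \emptyset$ and, since $\Bar{C_i} \subset \Bar{C}$, also $D \cap \Bar{C} \neq \emptyset$, so $D \preceq C$. Hence every cell subadjacent to $C_i$ is subadjacent to $C$ and so is well-bordered by hypothesis. The inductive hypothesis therefore applies to $C_i$, giving that $C_i$ is closure finite and $\Bar{C_i}$ is a finite union of cells of $\cP$.

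The conclusion then follows immediately: $\Bar{C}$ is the finite union of $C$ together with the finitely many cells used to cover each of the finitely many $\Bar{C_i}$. The only real subtlety, and the one point requiring care, is the propagation statement that subadjacency to $C_i$ implies subadjacency to $C$; once that is in hand, the recursion terminates because the dimension drops at each level and reaches zero in at most $d$ steps.
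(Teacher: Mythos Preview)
Your proof is correct and follows essentially the same inductive strategy as the paper: induct on $\dim C$, use well-borderedness to write $\partial_c C = \bigcup_i \Bar{C_i}$, check that the hypothesis propagates to each $C_i$ via the subadjacency chain $D \preceq C_i \Rightarrow D \preceq C$, and apply the inductive hypothesis. The paper phrases the induction slightly differently---reducing via Lemma~\ref{lem:cfbdysubadj} to showing that every $D \preceq C$ with $D \neq C$ lies in $\partial_c C$---but the mechanism is identical, and your version has the minor advantage of establishing the finiteness of the cell cover of $\Bar{C}$ directly rather than through Lemma~\ref{lem:cfbdysubadj}.
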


\begin{proof}
In view of Lemma~\ref{lem:cfbdysubadj} it suffices to show that
$D\subseteq\partial_c C$ if $D\preceq C$ and $D \neq C$. We proceed by
induction on $\dim C$: for $\dim C=0$ there is nothing to prove.

As $C$ is well-bordered, $\partial_c C = \bigcup_i \Bar{C_i}$ for some
finite collection of cells $C_i$ with $\dim C_i = \dim C -1$. If
$D \cap C_i \neq \emptyset$, for some $i$, then $D = C_i$; otherwise,
$D \cap \Bar{C_i} \neq \emptyset$ for some $i$. Then by induction
$D \subset \partial_c C_i \subset \partial_c C$ and the result
follows.
\end{proof}

\begin{corollary}\label{cor:wbimpliescf}
Any well-bordered cell decomposition is closure finite.
\end{corollary}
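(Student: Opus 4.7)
The plan is to observe that this corollary is an immediate consequence of Lemma~\ref{lem:wbclosurecf}, applied to every cell. By Convention~\ref{conv:decompositionnames}, saying that $\cP$ is a well-bordered cell decomposition means precisely that every cell of $\cP$ is well-bordered. In particular, for any $C \in \cP$, both $C$ itself and every cell $D \in \cP$ subadjacent to $C$ satisfy the well-bordered property. So the hypotheses of Lemma~\ref{lem:wbclosurecf} hold for $C$, and we conclude that $C$ is closure finite in $\cP$. As $C$ was arbitrary, $\cP$ is a closure finite cell decomposition, again by Convention~\ref{conv:decompositionnames}.

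There is no real obstacle: the content is entirely in Lemma~\ref{lem:wbclosurecf}, and the corollary is just the ``global'' restatement obtained by quantifying over all cells. The only thing one might pause to verify is that the convention really does license the two applications being made, namely reading the hypothesis ``well-bordered decomposition'' as ``every cell, including each subadjacent $D$, is well-bordered'', and reading the conclusion ``closure finite decomposition'' as ``every cell is closure finite''. Both uses are literal instances of the convention, so no further argument is needed.
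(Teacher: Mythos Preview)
Your argument is correct and matches the paper's intended approach exactly: the corollary is stated immediately after Lemma~\ref{lem:wbclosurecf} with no separate proof, precisely because it follows by applying that lemma to each cell under Convention~\ref{conv:decompositionnames}, just as you have written.
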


As we have seen, \cite[Example~2.9]{lazard2010} shows that the
converse is not true.  However, for regular cell decompositions the
two coincide.

\begin{lemma}\label{lem:regthencf=wb}
A regular cell decomposition of a compact set $S \subset \RR^n$ is
closure finite if and only if it is well-bordered.
\end{lemma}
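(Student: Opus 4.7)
One direction, well-bordered $\Rightarrow$ closure finite, is already Corollary~\ref{cor:wbimpliescf} and uses neither regularity nor compactness. The substantive task is therefore to prove that a regular closure finite decomposition $\cP$ is well-bordered; my plan is to reduce this cell by cell to a Baire category argument on the boundary sphere. Fix $C\in\cP$ with $d=\dim C$. The small cases $d\le 1$ are immediate from closure finiteness, so I will assume $d\ge 2$. Regularity of $C$ gives a homeomorphism $(\Bar C,C)\to(\Bar{\BB}^d,\BB^d)$ that restricts to $\partial_c C\cong\SS^{d-1}$, and closure finiteness writes $\partial_c C=\bigsqcup_{i\in I}D_i$ as a finite disjoint union of cells of $\cP$ distinct from $C$, each of dimension at most $d-1$ by the observation after Definition~\ref{def:adjacent}. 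Partition $I=A\sqcup B$ according as $\dim D_i=d-1$ or $\dim D_i\le d-2$; the target is $\partial_c C=\bigcup_{i\in A}\Bar{D_i}$.

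For each $i\in A$ the cell $D_i$ is a topological $(d-1)$-ball embedded in the $(d-1)$-manifold $\partial_c C$, hence open in $\partial_c C$ by invariance of domain; in particular $U:=\bigsqcup_{i\in A}D_i$ is open. For each $j\in B$ I would show separately that the closure $\Bar{D_j}$ is nowhere dense in $\partial_c C$: since $D_j$ is locally compact it is open in its closure $\Bar{D_j}$, so any nonempty open subset $V$ of $\partial_c C$ contained in $\Bar{D_j}$ would meet $D_j$ (which is dense in $\Bar{D_j}$) in a set $V\cap D_j$ that is simultaneously open in $\partial_c C$ and contained in the topological ball $D_j$ of dimension $\dim D_j\le d-2$, contradicting invariance of dimension.

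Granting these two facts, suppose for contradiction that $\Bar U\ne\partial_c C$. Then $W:=\partial_c C\setminus\Bar U$ is a nonempty open subset of $\partial_c C$ contained in $\bigsqcup_{j\in B}D_j\subseteq\bigcup_{j\in B}\Bar{D_j}$, so $W$ is a subset of a finite union of nowhere dense sets and is therefore meager in $\partial_c C$. But $\partial_c C$ is a compact metric space and so a Baire space, whence its nonempty open subspace $W$ is itself Baire and cannot be meager, a contradiction. Hence $\Bar U=\partial_c C$, exhibiting $C$ as well-bordered. The main obstacle is the nowhere-denseness claim in the middle paragraph; it rests on local compactness of cells together with invariance of dimension, and once it is in hand the Baire step and the rest are routine.
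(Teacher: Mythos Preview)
Your argument is correct and follows essentially the same route as the paper: identify $\partial_c C\cong\SS^{d-1}$ via regularity, and show that the finitely many $(d-1)$-cells in the closure-finite decomposition of $\partial_c C$ have dense union by a dimension argument. The paper states this last step as a one-liner; you have supplied the details via invariance of domain and invariance of dimension.

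One minor remark: invoking Baire category is heavier than necessary. Since $B$ is finite, the union $\bigcup_{j\in B}\Bar{D_j}$ is a \emph{finite} union of nowhere dense sets and hence itself nowhere dense, so it cannot contain the nonempty open set $W$ --- no appeal to the Baire property is needed.
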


\begin{proof}
If $C$ is a closure finite regular $d$-cell then $\partial_c C$ is
homeomorphic to $\SS^{d-1}$, and decomposes into finitely many
cells. Then $\partial_c C$ is the closure of the union of the
$(d-1)$-cells in that decomposition.

The other direction is just Corollary~\ref{cor:wbimpliescf}.
\end{proof}

\begin{definition}\label{def:locallyboundary}
For any topological property $\Pi$, we say that a set $C\subset \RR^n$
is \emph{locally boundary $\Pi$} if every $p\in \partial_c C$ has a
base of neighbourhoods $N$ in $\Bar C$ such that $\Pi$ holds for each
$N\cap C$.
\end{definition}

In many cases one may take the neighbourhoods $N$ to be the
intersections $\BB(p,\varepsilon)\cap C$ for $0<\varepsilon \ll 1$: we
shall do this without further comment when it is convenient, but one
should check that it is permissible to do so. An example of a property
$\Pi$ for which this would not be permissible is disconnectedness:
$\RR_+\subset\RR$ is locally boundary disconnected (as well as being
locally boundary connected!) because we may take for $N$ the sets
$[0,\frac{1}{r})\cup(\frac{2}{r},\frac{3}{r})$, but the intervals
$(0,\varepsilon)$ are all connected. We shall not in fact consider any
properties for which the balls are not suitable neighbourhoods.

Lazard \cite[Definition~2.7]{lazard2010} defines ``boundary smooth'',
which according to Definition~\ref{def:locallyboundary} is the same as
``locally boundary connected''. We prefer this terminology because it
extends to other properties (we shall need ``locally boundary simply
connected'' later, for instance) and because the term ``smooth'' is
already overloaded. In particular, ``boundary smooth'' has nothing to
do with either being $C^\infty$ or the absence of singularities.

\subsection{Semi-algebraic cell decompositions}\label{subsect:semialgdecomp}

Now we limit ourselves to semi-algebraic cells. We shall make constant
use of the conic structure of semi-algebraic sets
\cite[Theorem~9.3.6]{BCR}. We also need a slightly stronger relative
version (take $X=\emptyset$ to recover the usual version).

\begin{proposition}\label{relativeconic}
Suppose that $X\subseteq Y$ are semialgebraic subsets of $\RR^n$ and
$p\in Y$. Then for $0<\varepsilon\ll 1$ there exists a semi-algebraic
homeomorphism
$\psi\colon \Bar\BB(p,\varepsilon)\to \Bar\BB(p,\varepsilon)$ which is
the identity on $\SS(p,\varepsilon)$, such that
$\|\psi(q)-p\|=\|q-p\|$ for all $q\in \Bar\BB(p,\varepsilon)$ and
$\psi\big((Y,X)\cap \Bar\BB(p,\varepsilon)\big)$ is the cone on
$(Y,X)\cap \Bar\SS(p,\varepsilon)$ with vertex $p$.
\end{proposition}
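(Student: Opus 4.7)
The plan is to prove this as a pair-version of the semi-algebraic conic structure theorem (\cite[Theorem~9.3.6]{BCR}), whose standard proof by Hardt's semi-algebraic triviality theorem essentially already yields what is wanted. The role of the pair $(Y,X)$ is played by a semi-algebraic partition refining both, and the distance-preserving property comes for free from applying the triviality theorem to the distance function itself.

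Choose a finite semi-algebraic partition $\cA$ of $\RR^n$ whose members are each entirely contained in, or disjoint from, each of $X$ and $Y$ (for instance, the connected components of the intersections of $X$, $Y\setminus X$, and $\RR^n\setminus Y$ with suitable cells of a stratification of $\RR^n$). Apply semi-algebraic triviality to the proper semi-algebraic map $f(q)=\|q-p\|$ together with $\cA$: there exists $\varepsilon>0$ such that the restriction $f\colon\Bar\BB(p,\varepsilon)\setminus\{p\}\to(0,\varepsilon]$ is trivial over $(0,\varepsilon]$ compatibly with every $A\in\cA$. This produces a semi-algebraic homeomorphism
\[
\Phi\colon (0,\varepsilon]\times\SS(p,\varepsilon)\isoto\Bar\BB(p,\varepsilon)\setminus\{p\}
\]
satisfying $\|\Phi(t,u)-p\|=t$, $\Phi(\varepsilon,u)=u$, and
$\Phi\bigl((0,\varepsilon]\times(A\cap\SS(p,\varepsilon))\bigr)=A\cap(\Bar\BB(p,\varepsilon)\setminus\{p\})$
for every $A\in\cA$; in particular, $\Phi$ is compatible with $X$ and with $Y$.

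Set $\psi(p)=p$ and, for $q\neq p$ with $\Phi^{-1}(q)=(t,u)$, define $\psi(q)=p+\frac{t}{\varepsilon}(u-p)$. Then $\|\psi(q)-p\|=t=\|q-p\|$; on $\SS(p,\varepsilon)$, since $\Phi^{-1}(u)=(\varepsilon,u)$, we get $\psi(u)=u$; and continuity at $p$ is automatic because $t\to 0$ forces $\psi(q)\to p$. The radial-scaling map $(t,u)\mapsto p+\frac{t}{\varepsilon}(u-p)$ is a semi-algebraic homeomorphism of $(0,\varepsilon]\times\SS(p,\varepsilon)$ onto $\Bar\BB(p,\varepsilon)\setminus\{p\}$ sending $(0,\varepsilon]\times(Y\cap\SS(p,\varepsilon))$ bijectively onto the cone on $Y\cap\SS(p,\varepsilon)$ with vertex $p$ minus the vertex (and analogously for $X$); composing with $\Phi^{-1}$ thus gives $\psi\bigl((Y,X)\cap\Bar\BB(p,\varepsilon)\bigr)$ equal to the cone on $(Y,X)\cap\Bar\SS(p,\varepsilon)$ with vertex $p$, as required.

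The only substantive step is the construction of $\Phi$: one needs $\varepsilon$ small enough that $f$ has no ``stratified critical values'' in $(0,\varepsilon]$ with respect to the partition $\cA$, so that triviality applies. This is exactly the content of the standard proof of BCR's conic structure theorem; feeding in a partition refining both $X$ and $Y$ rather than a single set is routine and introduces no additional difficulty beyond relabelling. Everything else is unwinding the definition of the cone.
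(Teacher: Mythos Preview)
Your argument is correct. You open up the proof of the conic structure theorem and apply Hardt's semi-algebraic triviality directly in $\RR^n$ to the distance function $q\mapsto\|q-p\|$ together with a finite partition refining both $X$ and $Y$; the construction of $\psi$ from the trivialisation $\Phi$ is then routine radial rescaling. (A minor quibble: $q\mapsto\|q-p\|$ is not proper on all of $\RR^n$, but Hardt's theorem does not need properness, or one may restrict to a large closed ball first.)

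The paper takes a different route. Rather than invoking the family form of Hardt's theorem in $\RR^n$, it encodes the pair $(Y,X)$ as a single semi-algebraic set in one higher dimension, namely the mapping cylinder $M=(Y\times\{0\})\cup(X\times[0,1])\subset\RR^{n+1}$, applies the single-set conic structure theorem \cite[Theorem~9.3.6]{BCR} to $M$ near $(p,0)$, and declares $\psi$ to be the restriction to the slice $\{t=0\}$ of the resulting homeomorphism $\phi$ of the ball in $\RR^{n+1}$. This mapping-cylinder trick is an elegant reduction device, but for the restriction to make sense one still needs $\phi$ to preserve the hyperplane $\{t=0\}$, which is not automatic from the single-set statement and ultimately rests on the same family form of triviality that you invoke explicitly. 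Your direct approach is thus the more transparent of the two about where the actual content lies, at the cost of citing the proof of \cite[Theorem~9.3.6]{BCR} rather than its statement.
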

\begin{proof}
Consider
\[
M:=\{(y,t)\in \RR^n\times \RR\mid (y\in Y\text{ and
  }t=0)\text{ or }(y\in X\text{ and }0\le t \le 1)\}.
\]
This is a semi-algebraic set (it is the mapping cylinder of the
inclusion $X\inj Y$) so we may consider its conic structure near a
point $p\in M$ where $t=0$. Then it is sufficient to take $\psi$ to be
the restriction to $t=0$ of the map
$\phi\colon \Bar\BB(p,\varepsilon)\to \Bar\BB(p,\varepsilon)$ in
$\RR^{n+1}$ guaranteed by \cite[Theorem~9.3.6]{BCR}.
\end{proof}

The following consequence of the local conic structure is also
useful.

\begin{proposition}\label{prop:BretractsS}
Let $C\subset\RR^n$ be a semi-algebraic set and $p \in \partial_c
C$. Then for $0<\varepsilon\ll 1$ the intersection $C\cap
\BB(p,\varepsilon)$ has $C\cap \SS(p,\varepsilon)$ as a deformation
retract.
\end{proposition}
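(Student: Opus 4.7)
The plan is to apply Proposition~\ref{relativeconic} to the inclusion $X=C\subseteq Y=\Bar{C}$, which is valid since $p\in\partial_c C\subset\Bar{C}$. For $0<\varepsilon\ll 1$ this yields a semi-algebraic homeomorphism $\psi\colon\Bar\BB(p,\varepsilon)\to\Bar\BB(p,\varepsilon)$ that is the identity on $\SS(p,\varepsilon)$, preserves distance from $p$, and carries $(\Bar{C},C)\cap\Bar\BB(p,\varepsilon)$ onto the cone with vertex $p$ over $(\Bar{C},C)\cap\SS(p,\varepsilon)$. Because $p\in\partial_c C$ implies $p\notin C$, the image $\psi(C\cap\Bar\BB(p,\varepsilon))$ is precisely the punctured cone
\[
\Gamma=\{p+t(q-p)\mid q\in C\cap\SS(p,\varepsilon),\ t\in(0,1]\}.
\]

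I would then write down an explicit radial deformation retraction of $\Gamma$ onto $C\cap\SS(p,\varepsilon)$, namely $H(p+t(q-p),s)=p+(t+s(1-t))(q-p)$. On $\Gamma$ the parameters $t\in(0,1]$ and $q\in C\cap\SS(p,\varepsilon)$ depend continuously on the point (via $t=\|r-p\|/\varepsilon$ and $q=p+\varepsilon(r-p)/\|r-p\|$), so $H$ is continuous; it is the identity at $s=0$, maps $\Gamma$ into $C\cap\SS(p,\varepsilon)$ at $s=1$, and fixes $C\cap\SS(p,\varepsilon)$ for all $s$. Conjugating $H$ by $\psi^{-1}$, which is the identity on $\SS(p,\varepsilon)$, transports this to a strong deformation retraction of $C\cap\Bar\BB(p,\varepsilon)$ onto $C\cap\SS(p,\varepsilon)$.

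The substantive content of the argument is entirely carried by Proposition~\ref{relativeconic}: it is what upgrades the standard conic structure of $\Bar{C}$ to a coning of the pair $(\Bar{C},C)$, thereby ensuring that the radial deformation stays inside $C$ on the punctured cone. After that the argument is formal. The two points to watch are, first, that $p\in\Bar{C}\setminus C$, so the relevant cone has its vertex removed and one must verify continuity on the punctured cone (which is automatic since $p$ is not in the domain), and second, that the natural ambient set for the retraction is $C\cap\Bar\BB(p,\varepsilon)$ rather than $C\cap\BB(p,\varepsilon)$, since $C\cap\SS(p,\varepsilon)$ must lie in the space being retracted; this is the intended reading of the statement.
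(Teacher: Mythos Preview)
Your argument is correct and follows the same idea as the paper's proof: use the local conic structure near $p$ to identify $C$ with a punctured cone over $C\cap\SS(p,\varepsilon)$, then retract radially. The differences are minor. First, you invoke Proposition~\ref{relativeconic} with the pair $(\Bar C,C)$, but the $\Bar C$ component plays no role in your argument; the paper instead applies the ordinary conic structure theorem \cite[Theorem~9.3.6]{BCR} directly to the semi-algebraic set $C\cup\{p\}$, which already gives the punctured cone after removing the vertex, so the relative version is not needed here. Second, you retract onto level~$1$ (the sphere itself) and therefore work in $C\cap\Bar\BB(p,\varepsilon)$, whereas the paper retracts onto level~$\tfrac12$, landing on a homeomorphic copy of $C\cap\SS(p,\varepsilon)$ inside the open ball; your closing remark about $\Bar\BB$ versus $\BB$ correctly identifies this wrinkle in the statement.
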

\begin{proof}
Applying \cite[Theorem~9.3.6]{BCR} to $C\cup\{p\}$ yields a
homeomorphism between $\Bar{\BB}(p,\varepsilon) \cap C\cup\{p\}$ and
the cone on $\SS(p,\varepsilon)\cap C$. Away from $\{p\}$, this
restricts to a homeomorphism between $(\Bar{\BB}(p,\varepsilon) \cap
C)\setminus\{p\}$ and $(\SS(p,\varepsilon)\cap C) \times (0,1]$, and
the latter retracts onto $(\SS(p,\varepsilon)\cap
C) \times \{\frac{1}{2}\}$.
\end{proof}

Because of Proposition~\ref{prop:BretractsS} we can often replace the
ball with a sphere when checking Definition~\ref{def:locallyboundary}.

\begin{corollary}\label{cor:homotopylb}
If $\Pi$ is a homotopy property for which
Definition~\ref{def:locallyboundary} can be checked on balls, and $C
\subset \RR^n$ is a semi-algebraic cell, then $C$ is locally boundary
$\Pi$ if and only if, for all $p\in \partial_c C$, there exists
$\delta>0$ such that $C\cap \SS(p,\varepsilon)$ has property $\Pi$ for
all $0<\varepsilon<\delta$.
\end{corollary}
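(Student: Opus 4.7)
The plan is to apply Proposition~\ref{prop:BretractsS} and exploit the fact that a homotopy equivalent pair of spaces either both possess $\Pi$ or both fail to, since $\Pi$ is by hypothesis a homotopy property. Concretely, for each $p\in\partial_c C$ the proposition supplies a $\delta_0>0$ such that for every $0<\varepsilon<\delta_0$ the set $C\cap\BB(p,\varepsilon)$ deformation retracts onto $C\cap\SS(p,\varepsilon)$; the two therefore satisfy $\Pi$ simultaneously.

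For the forward implication, I would unwind the assumption that $C$ is locally boundary $\Pi$ using the further hypothesis that Definition~\ref{def:locallyboundary} may be verified on balls. This produces a $\delta_1>0$ with $C\cap\BB(p,\varepsilon)$ enjoying $\Pi$ for all $0<\varepsilon<\delta_1$. Setting $\delta=\min(\delta_0,\delta_1)$ and applying the homotopy invariance of $\Pi$ through the deformation retraction yields that $C\cap\SS(p,\varepsilon)$ has $\Pi$ throughout $0<\varepsilon<\delta$.

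For the converse, I would run the same argument backwards: starting from a $\delta_1>0$ with $C\cap\SS(p,\varepsilon)$ enjoying $\Pi$ on $(0,\delta_1)$, the same retraction of Proposition~\ref{prop:BretractsS} transports $\Pi$ back to $C\cap\BB(p,\varepsilon)$ on $\bigl(0,\min(\delta_0,\delta_1)\bigr)$. Since these punctured balls form a base of neighbourhoods of $p$ in $\Bar C$ and balls suffice to test locally boundary $\Pi$, the definition is satisfied at $p$, and since $p\in\partial_c C$ was arbitrary, $C$ is locally boundary $\Pi$.

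The only mildly delicate point is ensuring that the deformation retract furnished by Proposition~\ref{prop:BretractsS} really exists for a full interval $(0,\delta_0)$ of radii, rather than merely at one scale; this is already built into the ``$0<\varepsilon\ll 1$'' formulation of that proposition and is a consequence of the semi-algebraic conic structure, whose cone parameter can be rescaled. Beyond this housekeeping, the proof is formal and requires no additional ingredient.
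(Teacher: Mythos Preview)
Your proposal is correct and matches the paper's approach: the paper states the corollary without a written proof, merely noting beforehand that Proposition~\ref{prop:BretractsS} lets one replace balls by spheres, which is exactly the deformation-retraction argument you spell out. Your treatment of the range of radii and the two implications is a faithful expansion of what the paper leaves implicit.
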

Clearly the same is also true with $\Bar\BB$ instead of $\SS$.

With the definitions we have made, being locally boundary $\Pi$ is
automatically an equiregularity invariant property. In particular, as
was pointed out in \cite{schwartz1983}, a regular cell, even if not
semi-algebraic, is always locally boundary connected.

\section{Cylindrical algebraic decomposition}\label{sect:cad}

We think of a cylindrical algebraic decomposition as a finite
partition of $\RR^n$ into semi-algebraic cells, built inductively, and
whose projections onto the first $k$ variables, for $k < n$, are
either disjoint or the same. These cells are not just arbitrary
semi-algebraic sets homeomorphic to $(0,1)^d$, for some $d \in \NN$,
but cells that arise from graphs $\Gamma(g)$ of some semi-algebraic
functions~$g$, as below.

\begin{definition}\label{def:cad}
A \emph{cylindrical algebraic decomposition} or \emph{{c.a.d.}}\ of
$\RR^n$ is a finite semialgebraic cell decomposition $\cP=\cP_n$ of
$\RR^n$ defined inductively by the following conditions.
\begin{enumerate}
  \item If $n=1$ then $\cP=\cP_1$ is any finite cell decomposition of
    $\RR$.
  \item The projection $\pr_n\colon \RR^n\to \RR^{n-1}$ on the last
    $n-1$ coordinates is \emph{cylindrical}: that is, if
    $C,\,C'\in\cP_n$ then either $\pr_n (C)\cap \pr_n (C')=\emptyset$
    or $\pr_n (C) = \pr_n (C')$.
  \item $\cP_{n-1}=\{\pr_n (C)\mid C\in \cP_n\}$ is a c.a.d.\ of
    $\RR^{n-1}$.
  \item For each $D\in\cP_{n-1}$ there are finitely many continuous
    semi-algebraic functions $g_1,\ldots,g_k\colon D\to \RR$,
    satisfying $g_j(p)<g_{j+1}(p)$ for all $j$ and all $p\in D$, such
    that $\cP\ni \Gamma(g_j)=\{(p,y)\mid g_j(p)=y\}$ for each $j$ and
    $\cP\ni \Delta_j=\{(p,y)\mid g_j(p)<y<g_{j+1}(p,y)\}$:
    furthermore both $\Delta_{-\infty}=\{(p,y)\mid y<f_1(p)\}$ and
    $\Delta_\infty=\{(p,y)\mid f_k(p)<y\}$ also belong to $\cP$.
\end{enumerate}
\end{definition}

\begin{definition}\label{def:section}
In Definition~\ref{def:cad}, the graphs $\Gamma(g_j)$ are called
\emph{sections} and the cells $\Delta_j$ are called \emph{sectors} of
$\cP$.
\end{definition}

A c.a.d.\ is usually chosen to respect some data, such as some
functions on $\RR^n$ or subsets of $\RR^n$.

\begin{definition}\label{def:Finvariantcad}
Let $\bF\subset \RR[x_1,\dots,x_n]$ be a finite set of nonzero
polynomials. A c.a.d.\ $\cP$ is said to be \emph{$\bF$-invariant} if,
for every $f \in \bF$, the sign of $f$ is constant on each $C \in
\cP$.
\end{definition}

This is sometimes called \emph{sign-invariance}: one could instead
require other properties of $f$, such as its order of vanishing, to be
constant on each~$C$, but we shall not need any other kind of
invariance here.

\begin{definition}\label{def:adaptedcad}
Let $S\subset \RR^n$ be a semi-algebraic set. A c.a.d.\ $\cP$ of
$\RR^n$ is \emph{adapted to $S$} if $S$ is a union of cells of $\cP$.
\end{definition}
It is sometimes useful to give some more information about $\cP$.

\begin{definition}\label{def:sampledcad}
A \emph{sampled c.a.d.}\ is a c.a.d.\ $\cP$ together with a choice of
base point $b_C\in C$ for each cell $C\in \cP$.
\end{definition}
In general, a c.a.d.\ is not a CW complex: for instance, a c.a.d.\ can
fail to be closure finite.

\begin{definition}\label{def:strongcad}
We say that a c.a.d.\ $\cP$ of $\RR^n$ is a \emph{strong {c.a.d.}}\ if
$\cP$ is well-bordered and locally boundary connected.
\end{definition}
This is equivalent to the definition in \cite{lazard2010} in view of
Corollary~\ref{cor:wbimpliescf}.

\subsection{$\bF$-invariant {c.a.d.}s}\label{subsect:Finvt}

The aim of this section is to investigate when an $\bF$-invariant
c.a.d.\ adapted to a closed bounded semi-algebraic set $S$ exhibits
$S$ as a regular cell complex.

\begin{definition}\label{def:reduced}
If $\cP$ is an $\bF$-invariant c.a.d.\ and $C\in\cP$ is a section, we
put $\bF_C=\{f\in \bF\mid f|_C\equiv 0\}$. We say that $(\cP,\bF)$
is \emph{reduced} if $\bF_C\neq \emptyset$, for every section
$C\in \cP$.
\end{definition}

We do not require that $C$ should actually be cut out by $\bF_C$, but
we will usually impose the next condition, which may be seen as a
weaker version.

\begin{definition}\label{def:wellbased}
If $\cP$ is an $\bF$-invariant {c.a.d.}, we say that a section
$C\in\cP$, with $\pr_n(C)=D\in\cP_{n-1}$, is a \emph{bad cell} for
$(\cP,\bF)$ if there is an $f\in\bF_C$ such that
$f|_{\pr_n^{-1}(D)}\equiv 0$. If there are no bad cells, we say that
$(\cP,\bF)$ is \emph{well-based}.
\end{definition}

Note that the definitions of bad cell and well-based depend on $\bF$
as well as $\cP$. Note also that we apply the term ``bad cell'' to
$C$, not $D$. This makes no difference, since if $C$ is a bad cell
then so is any $C'$ with $\pr_n(C')=\pr_n(C)$ (consider the same $f$),
but it does reflect our point of view of starting with a given c.a.d.\
rather than constructing one inductively.

We aim to show that reduced well-based strong c.a.d.s give regular
cell complexes (see Theorem~\ref{thm:wellbasedstronggivesreg} for the
precise statement). This was shown in \cite[Theorem~2]{schwartz1983}
for a c.a.d.\ $\cP$ constructed via Collins' algorithm
\cite{collins}: by \cite[Theorem~4.4]{lazard2010}, such a $\cP$ is
strong if it is well-based. Other algorithms are now in use, though,
such as c.a.d.\ via regular chains~\cite{ChenMorenoMaza2014b} or via
comprehensive Gr\"obner systems~\cite{Fukasakuetal2015a},
so we want to be able to dispense with the condition on the
construction.

We need two lemmas: the first is a variant of \cite[Lemma~2.5.6]{BCR}.

\begin{lemma}\label{lem:extendwellbased}
Suppose that $\bF\subset\RR[x_1,\dots,x_{n-1},y]$ is a finite set of
non-zero polynomials and denote by
\[
\bF'=\left\{\frac{\partial^r\! f}{\partial y^r}  \mid
  r\in\ZZ_{\ge 0},\,f\in\bF,\,\frac{\partial^r\! f}{\partial
  y^r}\not\equiv 0\right\} 
\]
its closure under the operator $\frac{\partial}{\partial y}$. Let
$C=\Gamma(g)$ be a bounded section in an $\bF'$-invariant c.a.d.\
$\cP$ of $\RR^n$, for a semi-algebraic continuous bounded function
$g \colon D = \pr_n(C) \to \RR$. If $\bF_C\neq\emptyset$ and
$(\cP,\bF)$ is well-based, then $g$ can be extended continuously to
$\Bar{D}$.
\end{lemma}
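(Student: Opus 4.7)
The plan is to show that for every $p_0 \in \overline{D}$, the cluster set
\[
L(p_0) := \overline{\Gamma(g)}\cap(\{p_0\}\times\RR)
\]
consists of a single point, which will then define the continuous extension $\bar g(p_0)$. Since $g$ is bounded, $L(p_0)$ is nonempty and compact; since everything is semi-algebraic, $L(p_0)$ is semi-algebraic, hence a finite union of closed intervals. The goal is to show it is in fact a single point.

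Fix $f\in\bF_C$, which exists by hypothesis. Continuity of $f$ immediately gives $f(p_0,L)=0$ for every $L\in L(p_0)$. If the polynomial $f(p_0,y)\in\RR[y]$ is not identically zero, it has only finitely many roots, so $L(p_0)$ is a finite subset of these roots; combined with a local connectedness argument relating $L(p_0)$ to the germ of $D$ at $p_0$, this forces $L(p_0)$ to be a single point.

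The substantive case is $f(p_0,y)\equiv 0$, and this is where $\bF'$-invariance enters crucially. The $y$-derivatives $f_i := \partial^i f/\partial y^i$, whenever nonzero, lie in $\bF'$, so each has constant sign $\sigma_i\in\{-,0,+\}$ on the cell~$C$. Clearly $\sigma_0=0$. I next argue there is a smallest $m$ with $\sigma_m\neq 0$: otherwise every $y$-derivative of $f$ vanishes at $y=g(p)$ for each $p\in D$, so by Taylor expansion $f(p,\cdot)\equiv 0$ as a polynomial in $y$ for every $p\in D$, i.e.\ $f\equiv 0$ on $\pr_n^{-1}(D)$, contradicting well-basedness. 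Thus for each $p\in D$ the value $g(p)$ is a root of $f(p,\cdot)$ of constant multiplicity exactly $m$, and $f_m(p,g(p))$ has constant nonzero sign~$\sigma_m$.

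The main obstacle is closing the argument in the case $f(p_0,y)\equiv 0$: the polynomial $f$ does not by itself pin down the cluster value, since it vanishes identically on the fibre $\{p_0\}\times\RR$. The expected strategy is to invoke the fibered form of Thom's Lemma (the analogue of \cite[Lemma~2.5.6]{BCR} in the present setting) for the family $\{f_i(p,\cdot)\}_i$ of polynomials in $y$ parameterised by $p$ near $p_0$. On $C$ this family carries the sign pattern $(0,\ldots,0,\sigma_m,\sigma_{m+1},\ldots)$; by $\bF'$-invariance this pattern is forced on the cells of $\cP$ adjacent to $C$, and well-basedness rules out the degeneration in which $\bF$ nullifies on the whole cylinder above a boundary cell of $D$. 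Propagating the Thom-type stratification of each fibre continuously up to $p_0$ should then identify the cluster value as $g_j(p_0)$ for a unique section $\Gamma(g_j)$ of $\cP$ above the cell of $\cP_{n-1}$ containing $p_0$, yielding the desired continuous extension.
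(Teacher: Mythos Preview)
Your case split is the right one, but the roles of the two cases are essentially inverted, and each has a genuine gap.

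For $f(p_0,\cdot)\not\equiv 0$: the ``local connectedness argument'' is not available. The lemma does not assume that $D$, or $\cP_{n-1}$, is locally boundary connected, so $D\cap\BB(p_0,\varepsilon)$ may well be disconnected and there is no reason for the cluster set $L(p_0)$ to be connected; a finite disconnected $L(p_0)$ is not excluded by anything you have written. This is precisely where $\bF'$-invariance and Thom's Lemma must be used, and you never invoke them here. If $L\neq L'$ both lie in $L(p_0)\subset\Bar C$, then for every $r$ the derivative $\partial^r f/\partial y^r$ has constant sign on $C$, hence the same weak sign at $(p_0,L)$ and $(p_0,L')$; but Thom's Lemma (\cite[Proposition~2.5.4]{BCR}) says that at two distinct roots of the nonzero one-variable polynomial $f(p_0,\cdot)$ some derivative must take strictly opposite signs. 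That contradiction is what forces $|L(p_0)|=1$.

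For $f(p_0,\cdot)\equiv 0$: no ``propagation of the Thom stratification up to $p_0$'' is needed, and what you sketch is not a proof. This case is simply impossible. Since $f\in\bF\subset\bF'$ and $\cP$ is $\bF'$-invariant, $f$ has constant sign on every cell of $\cP$; every cell of $\cP$ lying over the cell $D'\in\cP_{n-1}$ containing $p_0$ meets the line $\{p_0\}\times\RR$, on which $f$ vanishes, so $f\equiv 0$ on all of $\pr_n^{-1}(D')$. Any section over $D'$ is then a bad cell for $(\cP,\bF)$, contradicting well-basedness. Your own argument that some $\sigma_m\neq 0$ on $C$ is exactly this reasoning applied over $D$; apply it over $D'$ instead and the case disappears outright.
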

\begin{proof}
The proof is similar to step~(ii) in the proof of
\cite[Lemma~2.5.6]{BCR}. It is enough to show that $g$ extends
continuously to $D\cup\{p\}$, for an arbitrary $p\in \Bar D$.

By the Curve Selection Lemma \cite[Theorem~2.5.5]{BCR}, we choose a
continuous semi-algebraic path $\eta\colon [0,1]\to \Bar
D\cap\BB(p,1)$, such that $\eta(0)=p$ and $\eta(t)\in D$ for
$t>0$. Then we define $\Tilde\eta(t)=g(\eta(t))\in \RR$, for $t>0$:
since $g$ is bounded by hypothesis, $\Tilde\eta\colon (0,1]\to \RR$ is
a bounded continuous semialgebraic function and hence extends
continuously to $\Tilde\eta\colon [0,1]\to \RR$ by
\cite[Proposition~2.5.3]{BCR}.

Now we extend $g$ to $\Bar g\colon D\cup\{p\}\to \RR$ by putting $\Bar
g(p)=\Tilde\eta(0)$, and $\Bar g=g$ on $D$. The claim is that $\Bar g$
is continuous at~$p$. If not, then
\[
\exists\, \varepsilon>0 \,\, \forall\delta >0\,\, \exists\, q\in
D\ \text{such that}\ 
 \|q-p\|<\delta\ \text{and}\ |g(q)-\Bar g(p)|\ge \varepsilon,
\]
and hence if we define $E=\{q\in D\mid |g(q)-\Bar g(p)|\ge
\varepsilon\}$ then $p\in \Bar E$. Again applying the Curve Selection
Lemma we obtain a path $\theta\colon [0,1]\to \Bar E$ with
$\theta(0)=p$ and $\theta(t)\in E$ for $t>0$, so exactly as before
we put $\Tilde{\theta}(t)=g(\theta(t))$ and this extends
continuously to $\Tilde{\theta}\colon [0,1]\to \RR$. By continuity,
we have $|\Tilde\eta(0)-\Tilde{\theta}(0)|\ge \varepsilon$, and
also $(p,\Tilde\eta(0))\in \Bar C$ and $(p,\Tilde{\theta}(0))\in
\Bar C$.

Now suppose that $f\in \bF_C$, so $f\in \bF$ and $f|_C\equiv
0$. Consider the polynomial $f_p(y)=f(p,y)\in \RR[y]$, and observe
that $f_p(\Tilde\eta(0))=f_p(\Tilde{\theta}(0))=0$. If $f_p$ is not
the zero polynomial, we may consider the set $\{f_p, \frac{df_p}{dy},
\frac{d^2\!f_p}{dy^2},\ldots\}$ of all derivatives of $f_p$. By
$\bF'$-invariance, for any given $r$ the sign of
$\frac{\partial^r\! f}{\partial y}$ is the same near
$(p,\Tilde\eta(0))$ (say at $(\eta(t),\Tilde\eta(t))$ for small
$t>0$) as near $(p,\Tilde\theta(0)$. Hence $\frac{d^r\! f_p}{dy^r}$
cannot have opposite signs at $y=\Tilde\eta(0)$ and
$y=\Tilde\theta(0)$, although one might be zero and the other not.

But this contradicts Thom's Lemma \cite[Proposition~2.5.4]{BCR}: at
two distinct zeros of a real polynomial in one variable, some
derivative must have opposite signs.

Hence, if $\Bar g$ is discontinuous at~$p$, we must have $f_p\equiv
0$: that is, $f$ is identically zero above~$p$. But then, by
cylindricity and $\bF'$-invariance, $f$ must be identically zero above
the cell in $\cP_{n-1}$ containing~$p$, contrary to the assumptions.
\end{proof}

Next we need a lemma that allows us to pass extensions up from subdivisions.

\begin{lemma}\label{lem:extendbdysmooth}
Let $C=\Gamma(g)$ be a bounded, local boundary connected section in an
$\bF$-invariant c.a.d.\ $\cP$ of $\RR^n$, for a semi-algebraic
continuous function $g\colon D =\pr_n(C) \to \RR$. Suppose that $\cP'$
is a well-based strong $\bF$-invariant c.a.d.\ refining $\cP$ (i.e.\
each cell in $\cP'$ is a subset of a cell of $\cP$), in which $C$ is
partitioned into sections $C_i=\Gamma(g_i)$ for semi-algebraic
continuous functions $g_i \colon D_i=\pr_n(C_i) \to \RR$. If all the
$g_i$ extend continuously to $\Bar{D}_i$, then $g$ extends
continuously to $\Bar{D}$.
\end{lemma}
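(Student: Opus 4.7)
The plan is to define $\bar g\colon \bar D\to \RR$ by setting $\bar g(p)=\bar{g_i}(p)$ for any $i$ with $p\in \bar{D_i}$. Since $\bar D = \bigcup_i \bar{D_i}$, at least one such $i$ always exists, so the real content is to verify that this assignment is well-defined---that is, $\bar{g_i}(p)=\bar{g_j}(p)$ whenever $p\in\bar{D_i}\cap\bar{D_j}$---after which continuity follows quickly.

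For $p\in D$, well-definedness is automatic: if $p\in \bar{D_i}$ then any sequence $q_n\in D_i$ with $q_n\to p$ satisfies $\bar{g_i}(p)=\lim g_i(q_n)=\lim g(q_n)=g(p)$, using continuity of $g$ on $D$ and the equality $g|_{D_i}=g_i$. Hence $\bar g$ agrees with $g$ on $D$.

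For $p\in\bar D\setminus D$ I would argue by contradiction. Suppose $p\in\bar{D_i}\cap\bar{D_j}$ and $y_1:=\bar{g_i}(p)\neq\bar{g_j}(p)=:y_2$. Then $(p,y_1),(p,y_2)\in\partial_c C$ are distinct. Local boundary connectedness of $C$ at these two points gives, for all sufficiently small $\varepsilon>0$, connected sets $V_l:=C\cap\BB((p,y_l),\varepsilon)$ ($l=1,2$), which are disjoint as soon as $2\varepsilon<|y_1-y_2|$. Because $C=\Gamma(g)$, the projection $\pr_n$ carries each $V_l$ homeomorphically onto an open connected subset $W_l\subset D$, and $p\in\bar{W_l}$. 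The fibre $\bar C\cap\pr_n^{-1}(p)$ equals the finite set $\{\bar{g_i}(p)\mid p\in\bar{D_i}\}=\{y_1,\dots,y_k\}$, and a small ball argument then shows that for $\delta>0$ sufficiently small every $q\in D\cap\BB_{n-1}(p,\delta)$ has $(q,g(q))\in\bigcup_l\BB((p,y_l),\varepsilon)$. Consequently $D\cap\BB_{n-1}(p,\delta)$ is the disjoint union of the nonempty clopen sets $W_l\cap\BB_{n-1}(p,\delta)$, and in particular $W_1$ and $W_2$ lie in different connected components of $D$ near $p$.

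The hard step will be to extract a contradiction from this local disconnection, since a cell of a general c.a.d.\ can be disconnected near a boundary point. The extra leverage should come from the strong refinement $\cP'$: by LBC applied to each $D_i\in\cP'_{n-1}$, each $D_i\cap\BB_{n-1}(p,\delta)$ is connected and therefore entirely contained in one $W_{l(i)}$, so the value $\bar{g_i}(p)=y_{l(i)}$ is determined by which group $D_i$ sits in. Using well-borderedness of $\cP'_{n-1}$ to traverse a staircase of subadjacencies among the cells $D_i$ with $p\in\bar{D_i}$, combined with the cylindrical arrangement of $\cP'$ above the cell $D_0\in\cP'_{n-1}$ containing $p$ (so that each $g_i$ limits, as one approaches $p$ from $D_i$, to one of the finitely many sections of $\cP'$ above $D_0$), one should force $l(i)=l(j)$ and thereby derive the desired contradiction. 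Once well-definedness of $\bar g$ is in hand, continuity at any $p\in\bar D$ is immediate: a sequence $p_n\to p$ in $\bar D$ has, by finiteness of the partition, a subsequence lying in a single $\bar{D_i}$, and continuity of $\bar{g_i}$ gives $\bar g(p_n)=\bar{g_i}(p_n)\to\bar{g_i}(p)=\bar g(p)$.
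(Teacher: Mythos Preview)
Your overall plan---define $\bar g$ by gluing the $\bar g_i$ and check well-definedness---is exactly the paper's, and your treatment of points $p\in D$ and the final continuity argument are fine. The genuine gap is precisely where you yourself flag it as ``the hard step''. You propose to build a ``staircase of subadjacencies'' among the $D_i$ with $p\in\bar D_i$ using well-borderedness of $\cP'_{n-1}$ together with the cylindrical structure above the cell $D_0\ni p$, but neither ingredient produces such a chain. Well-borderedness only says that each $\partial_c D_i$ is covered by closures of cells one dimension lower; it gives adjacencies going \emph{down} in dimension from each $D_i$ separately, not a link between two given top cells $D_i$ and $D_j$. And knowing that each $\bar g_i(p)$ is one of the finitely many section-values of $\cP'$ over $D_0$ does not force any two of those values to coincide. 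So the contradiction is never closed, and ``one should force $l(i)=l(j)$'' remains a wish rather than an argument.

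The paper avoids the contradiction set-up entirely and proves $\bar g_i(p)=\bar g_j(p)$ directly, in two moves. First, the subadjacent case: if $D_j\preceq D_i$ then closure finiteness of the strong c.a.d.\ $\cP'_{n-1}$ gives $D_j\subseteq\bar D_i$, so $g_j$ and $\bar g_i$ agree on $D_j$ (both equal $g$ there), and hence $\bar g_j=\bar g_i$ on $\bar D_j$. Second, for arbitrary $i,j$, local boundary connectedness is used to see that $D\cap\BB(p,\varepsilon)$ is a connected semi-algebraic set and therefore semi-algebraically path-connected; a semi-algebraic path from a point of $D_i$ to a point of $D_j$ inside this set meets only finitely many cells of $\cP'_{n-1}$, yielding an explicit chain $D_i=D_{k(1)}\diamondsuit\cdots\diamondsuit D_{k(N)}=D_j$ in which each $\diamondsuit$ is $\preceq$ or $\succeq$, and the first move applied along the chain gives $\bar g_i(p)=\bar g_j(p)$. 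The path is the device you are missing: it is what actually manufactures the subadjacency chain. In fact your own analysis shows that what you need---that $D_i$ and $D_j$ lie in the same component of $D$ near $p$---is precisely the existence of such a path, so your contradiction framework is the contrapositive of the paper's argument but without the mechanism that makes it go.
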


\begin{proof}
It is enough to show that if $p \in \partial D \cap \partial D_i
\cap\partial D_j$ for some $i \neq j$, then $\Bar{g}_i(p) =
\Bar{g}_j(p)$.  Then $\Bar{g} \colon \Bar{D} \to \RR$ is consistently
defined by $\Bar{g}(p) = \Bar{g}_i(p)$ if $p \in \Bar{D}_i$.

We first show this with the assumption that $D_j\preceq D_i$. Then,
since $\cP'$ and therefore $\cP'_{n-1}$ are strong and in particular
closure finite, we have $D_j\subseteq\Bar{D_i}$ by
Lemma~\ref{lem:cfbdysubadj}. Therefore $g_j$ agrees with $\Bar{g}_i$
on $D_j$ (they both agree with $g$) and hence $\Bar{g}_j$ agrees with
$\Bar{g}_i$ also on $\Bar{D_j}$, which is contained in $\Bar{D_i}$.

For general $i$ and $j$, we construct a subadjacency chain from $D_i$
to $D_j$, by considering a semi-algebraic path between $D_i\cap
B(p,\epsilon)$ and $D_j\cap B(p,\epsilon)$; this is possible as
$D \cap B(p,\epsilon)$ is semi-algebraic and connected, and thus
semi-algebraically path-connected by
\cite[Prop.\ 2.5.13]{BCR}. The path gives us a way of selecting the
correct consecutive cells.

Let $\eta \colon [0,1] \to C\cap B(p,\epsilon)$ be a semi-algebraic
path with $\eta(0)\in D_i\cap B(p,\epsilon)$ and $\eta(1)\in D_j\cap
B(p,\epsilon)$. As $\eta([0,1])$ semi-algebraic, $\eta([0,1])) \cap
D_k$ has finitely many connected components, for any $D_k$; thus
$\eta^{-1}(D_k)$ is a finite collection of intervals contained in
$[0,1]$.

Considering the preimage of every cell in $\cP'_{n-1}$, we get a
finite partition $[0,1] = \bigcup_{\ell=1}^N I_\ell$ where $\sup
I_\ell = \inf I_{\ell+1}=t_\ell$. Denote by $D_{k(\ell)}$ the unique
cell that contains $\eta(I_\ell)$. Then
$\eta(t_\ell)\in\Bar{D}_{k(\ell)}\cap\Bar{D}_{k(\ell+1)}$, so
$D_{k(\ell)}$ and $D_{k(\ell+1)}$ are adjacent: moreover
$\eta(t_\ell)$ belongs to either $D_{k(\ell)}$ or $D_{k(\ell+1)}$, so
one is subadjacent to the other.

Now we have a finite chain of not necessarily distinct cells
\[
D_i=D_{k(1)}\diamondsuit D_{k(2)}\diamondsuit\dots\diamondsuit
D_{k(N)}=D_j,
\]
where each $\diamondsuit$ stands for either $\preceq$ or
$\succeq$. Hence $\Bar{g}_i(p)=\Bar{g}_{k(2)}(p)=\dots=\Bar{g}_j(p)$,
so $\Bar{g}(p)$ is well-defined.
\end{proof}

\begin{theorem}\label{thm:wellbasedstronggivesreg}
Suppose $\cP$ is an $\bF$-invariant, reduced, well-based strong
c.a.d.\ of $\RR^n$ adapted to a closed and bounded subset $S$. Then
the corresponding decomposition of $S$ is a regular cell complex.
\end{theorem}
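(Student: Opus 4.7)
The plan is to argue by induction on $n$, reducing regularity of cells of $\cP$ in $S$ to the inductive regularity of their projections together with the continuous extension of section-defining functions to the closure. Closure finiteness comes for free from the strong hypothesis via Corollary~\ref{cor:wbimpliescf}. For $n=1$ every bounded cell of $\cP$ is either a point or a bounded open interval, so the conclusion is immediate.

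For the inductive step, I assume that $\cP_{n-1}$ realises $\pr_n(S)$ as a regular cell complex, so for every $D \in \cP_{n-1}$ meeting $\pr_n(S)$ there is a homeomorphism of pairs $(\Bar\BB^{\dim D}, \BB^{\dim D}) \to (\Bar D, D)$. Consider a cell $C \in \cP$ contained in $S$, with $D = \pr_n(C)$, and suppose first that $C = \Gamma(g)$ is a section. The crucial step is to produce a continuous extension $\Bar g \colon \Bar D \to \RR$; then $p \mapsto (p, \Bar g(p))$ is a homeomorphism of pairs $(\Bar D, D) \to (\Bar C, C)$ and regularity of $C$ follows from that of $D$. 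To construct $\Bar g$, enlarge $\bF$ to $\bF'$ closed under $\partial/\partial x_n$ and containing defining polynomials for every cell of $\cP$, and let $\cP^*$ be an $\bF'$-invariant, well-based, strong c.a.d.\ refining $\cP$ (for instance one produced by Collins' algorithm). Well-basedness with respect to $\bF' \supseteq \bF$ implies well-basedness with respect to $\bF$. The refinement partitions $C$ into sections $C_i = \Gamma(g_i)$ over cells $D_i \subseteq D$ of $\cP^*_{n-1}$. Since $(\cP, \bF)$ is reduced, any $f \in \bF_C$ vanishes on every $C_i \subseteq C$, so $\bF_{C_i} \neq \emptyset$, and Lemma~\ref{lem:extendwellbased} applied inside $\cP^*$ extends each $g_i$ continuously to $\Bar{D_i}$. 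Because $\cP$ is strong, $C$ is locally boundary connected, and Lemma~\ref{lem:extendbdysmooth} combines the $\Bar{g_i}$ into the desired $\Bar g$.

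For a sector $\Delta = \{(p, y) \mid g_j(p) < y < g_{j+1}(p)\}$ contained in $S$ (the unbounded top and bottom sectors cannot meet the bounded set $S$), both bounding sections $\Gamma(g_j), \Gamma(g_{j+1})$ are cells of $\cP$ lying in $S$, so the section case gives continuous extensions $\Bar g_j, \Bar g_{j+1} \colon \Bar D \to \RR$ with $\Bar g_j \le \Bar g_{j+1}$, strict on $D$. Then $\Bar \Delta = \{(p, y) \mid p \in \Bar D,\ \Bar g_j(p) \le y \le \Bar g_{j+1}(p)\}$, and the fibrewise linear map $(p, s) \mapsto \bigl(p, (1-s)\Bar g_j(p) + s\Bar g_{j+1}(p)\bigr)$ is a continuous surjection $\Bar D \times [0, 1] \to \Bar \Delta$ restricting to a homeomorphism $D \times (0, 1) \to \Delta$ and collapsing vertical fibres only over the semi-algebraic locus $Z = \{\Bar g_j = \Bar g_{j+1}\} \subseteq \partial D$. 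An auxiliary induction on $\dim D$, using the conic structure of $\Bar D$ at points of $Z \subseteq \partial D$ provided by Proposition~\ref{relativeconic}, shows that the resulting quotient remains homeomorphic to $\Bar \BB^{\dim \Delta}$, so $\Delta$ is a regular cell.

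Combining regularity of all cells with the closure finiteness supplied by the strong hypothesis gives the regular cell complex structure on $S$. The main obstacle, and the technical heart of the argument, is the extension of section-defining functions: it relies on derivative control via Thom's lemma inside Lemma~\ref{lem:extendwellbased}, which forces us to pass to an auxiliary refinement $\cP^*$ that is $\bF'$-invariant rather than merely $\bF$-invariant. The delicate points are first to ensure that $\cP^*$ can be chosen simultaneously $\bF'$-invariant, strong and well-based, and second to descend the extensions from $\cP^*$ back to $\cP$, where the reduced hypothesis is precisely what guarantees the non-emptiness of $\bF_{C_i}$ after refinement so that Lemma~\ref{lem:extendwellbased} applies uniformly.
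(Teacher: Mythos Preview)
Your proposal is correct and takes essentially the same approach as the paper: closure finiteness from the strong hypothesis, induction on $n$ reducing a section $C=\Gamma(g)$ to its projection $D$ by continuously extending $g$, passage to an $\bF'$-invariant refinement with $\bF'$ the closure of $\bF$ under $\partial/\partial x_n$, then Lemma~\ref{lem:extendwellbased} on the pieces $C_i$ followed by Lemma~\ref{lem:extendbdysmooth} to glue. The only differences are cosmetic: the paper dispatches sectors in one line by citing \cite[Lemma~5]{schwartz1983} rather than sketching the fibrewise-linear quotient, and it is less explicit than you are about the strong/well-based requirements on the refinement (which it simply takes to be $\bF'$-invariant).
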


\begin{proof}
A strong c.a.d.\ $\cP$ of $\RR^n$ is closure finite, so we just need
to show that $C \subset S$ is a regular cell. Moreover, if we can show
that the sections of $\cP$ are regular, then by
\cite[Lemma~5]{schwartz1983}, so are the sectors.

We will show that a section $C=\Gamma(g)$ is a regular cell by proving
that $(\Bar{C},C)$ is homeomorphic to $(\Bar{D},D)$, where $D
=\pr_n(C)\in \cP_{n-1}$ is the cell below $C$; then the result follows
by induction. It suffices to show that $g\colon D\to \RR$ extends
continuously to $\Bar{D}$, since then $\Bar g\colon (\Bar D, D)\to
(\Bar C, C)$ and $\pr_n\colon (\Bar C, C) \to (\Bar D, D)$ are
mutually inverse homeomorphisms.

Let $\bF'$ be the closure of $\bF$ under $\frac{\partial}{\partial
x_n}$; that is, the smallest set that contains $\bF$ and is closed
under partial differentiation with respect to $x_{n}$. We may choose a
$\bF'$-invariant c.a.d.\ $\cP'$ refining $\cP$. Then $C$ is
partitioned into sections $C_i\in \cP'$ and each $C_i=\Gamma(g_i)$ for
the continuous semi-algebraic function $g_i=g|_{D_i}$ (with, as usual,
$D_i=\pr_n(C_i)$).

Now we apply Lemma~\ref{lem:extendwellbased}, remembering that since
$C_i\subseteq C$ there is an $f\in \bF$ that vanishes on $C_i$. We
conclude that each $g_i$ can be extended continuously to $\Bar{D}_i$.

Finally, we use Lemma~\ref{lem:extendbdysmooth} to extend $g$
continuously to $\Bar{D}$.
\end{proof}

The above, in combination with Lemma~\ref{lem:regthencf=wb}, prompts
us to raise the following question.

\begin{question}\label{qu:cadthencf=wb}
Suppose that $\cP$ is a c.a.d.\ of $\RR^n$: is it true that $\cP$ is
closure finite if and only if it is well-bordered?
\end{question}

Some of the results in this section can be strengthened slightly, by
weakening global conditions so that they only apply where they are
needed. For example, in Theorem~\ref{thm:wellbasedstronggivesreg} it
would be enough for $\cP$ to give an $\cF$-invariant c.a.d.\ of some
open cylinder containing $S$ rather than of the whole of
$\RR^n$. Similarly, in Lemma~\ref{lem:extendwellbased} it is enough
for $(\cP,\bF_C)$ to be well-based (even just near $C$), and the
condition of $\bF'$-invariance can also be relaxed analogously.

A related question is whether (or how far) the results of this section
apply to c.a.d.s invariant for a formula, as in~\cite{McCallum1999a}
and~\cite{Bradfordetal2016a}.

\subsection{Topology of strong {c.a.d.}s}\label{subsect:topology}

The following conjecture is made in \cite[p.~94]{lazard2010}.

\begin{conjecture}\label{conj:strongcadisreg}
Suppose $\cP$ is a strong c.a.d.\ of $\RR^n$ adapted to a closed and
bounded semi-algebraic set $S$. Then the corresponding decomposition
of $S$ is a regular cell complex.
\end{conjecture}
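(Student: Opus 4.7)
The plan is to proceed by induction on the ambient dimension $n$, exploiting the cylindrical structure to reduce statements about cells in $\RR^n$ to statements about the induced c.a.d.\ $\cP_{n-1}$ in $\RR^{n-1}$. The base case $n=1$ is trivial. For the inductive step each cell of $\cP$ is either a section $\Gamma(g)$ over a cell $D\in\cP_{n-1}$ or a sector between consecutive sections; and as in the proof of Theorem~\ref{thm:wellbasedstronggivesreg}, once the sections are known to be regular, the sectors can be handled by \cite[Lemma~5]{schwartz1983}. So the work is to prove regularity of each bounded section $C=\Gamma(g)$, equivalently to show that $g\colon D\to \RR$ extends continuously to $\Bar D$.

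First I would try to replay the argument of Lemma~\ref{lem:extendwellbased} topologically, without the crutch of $\bF$-invariance. For a boundary point $p\in \Bar D\setminus D$, the vertical fibre $\pr_n^{-1}(p)\cap \Bar C$ is a bounded semi-algebraic subset of a line, hence a finite union of points and closed intervals. If this fibre is a singleton then $g$ extends continuously at $p$; otherwise, by well-borderedness and Lemma~\ref{lem:cfbdysubadj}, every point of the fibre lies in the closure of some $(d-1)$-cell of $\cP$ subadjacent to $C$. Using the relative conic structure (Proposition~\ref{relativeconic}) one should argue that two distinct fibre components, or a fibre component of positive length, would force $C\cap \BB(p,\varepsilon)$ to contain two disjoint semi-algebraic slabs accumulating at $p$, contradicting local boundary connectedness of $C$. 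Closing this argument rigorously is delicate because the limiting behaviour of $g$ along semi-algebraic paths in $D$ approaching $p$ can in principle be wild, and must be controlled using only the local data of $\cP$.

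For the sectors, well-borderedness expresses $\partial_c C$ as the union of finitely many closures of inductively regular $(d-1)$-cells, glued along regular cells of yet lower dimension. To conclude that $C$ is regular it suffices to show $\partial_c C\cong \SS^{d-1}$, since the generalised Schoenflies theorem would then produce a homeomorphism of pairs $(\Bar\BB^d,\BB^d)\to(\Bar C,C)$. The strong hypothesis yields a compact, path-connected $(d-1)$-dimensional semi-algebraic set that is everywhere locally connected. In low dimensions this already suffices: a closed connected $1$-complex whose links are arcs is $\SS^1$, and for $d=3$ one only needs to rule out higher genus, which follows once local boundary simple connectivity is assumed (the route of Theorem~\ref{thm:vstrongcadisreg}). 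For $d\ge 4$ the generalised Poincar\'e conjecture together with the $h$-cobordism theorem would do the job, provided enough higher connectivity of $\partial_c C$ has first been established.

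The main obstacle is exactly this last point: local boundary connectedness is only a $\pi_0$ condition, whereas identifying $\partial_c C$ with $\SS^{d-1}$ in dimension $d-1\ge 3$ requires $\pi_k$-triviality of links for every $k<d-1$. Strongness alone does not appear to provide this, which is why Theorem~\ref{thm:vstrongcadisreg} imposes the additional locally boundary simply connected hypothesis and restricts to $\dim S\le 3$. A possible route to the conjecture as stated would be to prove that, for strong semi-algebraic c.a.d.s, the higher local homotopy of links is automatically controlled by the algebraic structure of the defining polynomials; alternatively, one might pursue a semi-algebraic triangulation approach, refining the cells of $\cP$ to a regular CW structure compatible with $\cP$ and then collapsing back, using PL methods to sidestep the need for smooth $h$-cobordism. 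Either strategy requires a new input beyond the techniques used to prove Theorem~\ref{thm:wellbasedstronggivesreg}.
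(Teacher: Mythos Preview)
This statement is Conjecture~\ref{conj:strongcadisreg}, which the paper explicitly leaves \emph{open}: there is no proof in the paper to compare against. The paper establishes only the partial results Corollary~\ref{cor:2strongcadreg} (for $\dim S\le 2$) and Theorem~\ref{thm:vstrongcadisreg} (for $n=3$, under the extra hypothesis that $3$-cells are locally boundary simply connected). Your proposal is, appropriately, not a proof but a survey of strategies together with an honest account of the obstacles; in that respect it is consistent with the paper's own position.

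Where your outline diverges from the paper's partial arguments is in the treatment of sections. You propose to show directly that $g\colon D\to\RR$ extends continuously to $\Bar D$, by arguing that a non-singleton fibre of $\pr_n|_{\Bar C}$ over some $p\in\partial_c D$ would force two disjoint slabs of $C$ to accumulate at a boundary point, contradicting local boundary connectedness. The paper does not take this route in Section~\ref{subsect:topology}. Instead it treats all cells uniformly: it shows that $\Bar C$ is contractible via Smale's Vietoris-type theorem (Theorem~\ref{thm:closedcadcellcontractible}), then that $\Bar C$ is compatibly a manifold with boundary (Proposition~\ref{prop:lbc2cell+givesmfldwbdy}, Lemma~\ref{lem:strong2cellgivesmfldwbdy}, Theorem~\ref{thm:vstrongcadismfldwbdy}), and finally invokes the $h$-cobordism theorem to deduce regularity. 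Your direct-extension idea for sections, if it can be closed rigorously, would be more elementary and would bypass the $h$-cobordism machinery for sections altogether; the paper's route has the compensating advantage of applying symmetrically to sectors, where no graph function is available, and of isolating exactly which topological input (simple connectivity of $\partial_c C$) is missing in higher dimension.

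Your diagnosis of the main obstruction --- that strongness gives only a $\pi_0$ condition on links, while recognising $\partial_c C$ as $\SS^{d-1}$ for $d\ge 4$ demands control of higher homotopy --- matches the paper's own concluding remarks after Theorem~\ref{thm:vstrongcadisreg}, where it is suggested that the conjecture probably requires cells to be locally boundary contractible rather than merely locally boundary connected.
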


We prove some cases of this using techniques from topology. By the
nature of the proofs, they are valid only for $\RR$, not for arbitrary
real closed fields. The idea is that a regular cell is automatically a
manifold with boundary: conversely, it is sometimes possible to give
conditions on a manifold with boundary that are sufficient to ensure
that it is a regular cell, and in low dimension we are able to verify
that these conditions always hold.

\begin{definition}
\label{def:homology/homotopysphere}
Let $X$ be a compact $d$-manifold. We say that $X$ is a \emph{homology
$d$-sphere} if $X$ has the same homology groups as $\SS^d$. We say
that $X$ is a \emph{homotopy $d$-sphere} if $X$ has the same homotopy
type as $\SS^d$.
\end{definition}

It follows from the Hurewicz theorem \cite[Thm~4.32]{hatcher2002} and
Whitehead's theorem \cite[Thm~4.5]{hatcher2002} that any simply
connected homology sphere $Y$ is a homotopy sphere. One must heed the
warning given in \cite{hatcher2002} after the proof of Whitehead's
theorem: we need a weak homotopy equivalence, that is, a map that
induces isomorphisms between the homotopy groups. However, if
$B\subset Y$ is an open ball then a map that identifies $B\imic\BB^d$
with the complement of a point $q\in\SS^d$ and sends all of
$Y\setminus B$ to $q$ is a weak homotopy equivalence between $Y$ and
$\SS^d$.

\begin{definition}\label{def:mfldwbdy}
A topological space $X$ is a \emph{$n$-dimensional manifold with
boundary} if, for each $x \in X$, there exists a neighbourhood $V$ of
$x$ that is homeomorphic to an open set in either $\RR^n$ or
$(\RR_{\ge 0})^n$.  The \emph{manifold boundary of $X$}, denoted
$\partial_m X$, is the set of points of $X$ with no neighbourhood
homeomorphic to an open set in $\RR^n$.

If $X=\Bar C$ for some $C\subset\RR^m$ we say that $\Bar{C}$ is
\emph{compatibly a manifold with boundary} if
$\partial_m\Bar{C}=\partial_cC$.
\end{definition}

For more on manifolds with boundary see~\cite[p~252]{hatcher2002}. Our
main use of them is based on the following easy result.

\begin{lemma}\label{lem:bdyishomologysphere}
Let $X$ be a $d$-dimensional compact, contractible manifold with
boundary. Then the boundary $\partial_m X$ of $X$ is a homology
$(d-1)$-sphere.
\end{lemma}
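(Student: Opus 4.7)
The plan is to apply Lefschetz duality for compact manifolds with boundary together with the long exact sequence of the pair $(X,\partial_m X)$, using contractibility to trivialize both $H_*(X)$ and $H^*(X)$.

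First I would record the prerequisites. The boundary $\partial_m X$ of a compact $d$-manifold with boundary is automatically a compact $(d-1)$-manifold without boundary, so the statement makes sense. Also, $X$ is orientable: being contractible it is simply connected, so its orientation double cover cannot be a nontrivial connected cover and must therefore be trivial.

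With this in hand, Lefschetz duality gives $H_k(X,\partial_m X;\ZZ)\cong H^{d-k}(X;\ZZ)$ for every $k$. Contractibility makes the right-hand side equal to $\ZZ$ when $k=d$ and zero otherwise. Feeding this into the reduced long exact sequence of the pair,
$$\cdots\to \tilde H_k(\partial_m X)\to \tilde H_k(X)\to H_k(X,\partial_m X)\to \tilde H_{k-1}(\partial_m X)\to\cdots,$$
and using $\tilde H_*(X)=0$, the connecting maps yield $\tilde H_{k-1}(\partial_m X)\cong H_k(X,\partial_m X)$ for every $k$. Hence $\tilde H_{d-1}(\partial_m X)\cong\ZZ$ and $\tilde H_j(\partial_m X)=0$ for $j\neq d-1$, which is precisely the reduced homology of $\SS^{d-1}$.

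The argument is essentially routine; the only subtle ingredient is orientability, which is needed to invoke integral Lefschetz duality but is supplied for free by contractibility. A minor sanity check worth noting is that $\partial_m X$ is non-empty for $d\ge 1$: otherwise $X$ would be a closed orientable $d$-manifold and hence have $H_d(X)=\ZZ$, contradicting contractibility.
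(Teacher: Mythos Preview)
Your proof is correct and follows essentially the same route as the paper: Lefschetz duality combined with the long exact sequence of the pair $(X,\partial_m X)$ and the vanishing of $\tilde H_*(X)$ and $\tilde H^*(X)$. You are in fact slightly more careful than the paper, which omits the remark that contractibility guarantees the orientability needed for integral Lefschetz duality.
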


\begin{proof}
The homology sequence of $(X,\partial_m X)$ is
\[
\dotsb \to H_i(\partial_m X) \to H_i(X)\to H_i(X,\partial_m X) \to
H_{i-1}(\partial_m X) \to \dotsb
\]
and Lefschetz duality \cite[Theorem 3.43]{hatcher2002} gives
$H_k(X,\partial_m X) \cong H^{d-k}(X)$. Since $X$ is contractible we
have $H_0(X)\imic H^0(X)\imic\ZZ$ and $H_i(X)=H^i(X)=0$ for $i\neq 0$,
and hence $H_0(\partial_m X)\imic H_n(\partial_m X)\imic \ZZ$ and
$H_i(\partial_m X)=0$ for $i\neq 0,\,n$.
\end{proof}

In order to apply this to c.a.d.s we need a result
(Theorem~\ref{thm:closedcadcellcontractible}) on the contractibility
of $\Bar{C}$, for a c.a.d.\ cell~$C$. First we recall the following
theorem of Smale \cite{smale1957}.

\begin{theorem}\label{thm:smale}
Suppose that $f\colon X\to Y$ is a proper surjective continuous map
between connected, locally compact separable metric spaces, and $X$ is
locally contractible. If all the fibres of $f$ are contractible and
locally contractible, then $f$ is a weak homotopy equivalence.
\end{theorem}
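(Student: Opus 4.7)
The plan is to verify directly that $f_*\colon \pi_n(X,x_0)\to \pi_n(Y,f(x_0))$ is a bijection for every $n\ge 0$ and every basepoint $x_0\in X$. The case $n=0$ is immediate: $f$ is surjective, so $\pi_0(f)$ is surjective, while connectedness of the fibres (they are contractible) means distinct components of $X$ lie over distinct components of $Y$. For $n\ge 1$ I would proceed geometrically, lifting representative maps $\phi\colon \SS^n\to Y$ and homotopies $\SS^n\times[0,1]\to Y$ across a sufficiently fine open cover of $Y$ chosen so that preimages are contractible.

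The crucial technical ingredient is a neighbourhood lemma: for every $y\in Y$ and every open $W\subseteq X$ containing the fibre $f^{-1}(y)$, there exists an open $U\ni y$ with $f^{-1}(U)\subseteq W$ and $f^{-1}(U)$ contractible. Properness delivers the containment, because $f^{-1}(y)$ is compact. For contractibility one exploits the two local-contractibility hypotheses: local contractibility of $X$ builds, inside $W$, a nested sequence of neighbourhoods $W_1\supseteq W_2\supseteq\dotsb$ of $f^{-1}(y)$ in which each inclusion $W_{i+1}\inj W_i$ is null-homotopic relative to the fibre; local contractibility of the fibre itself lets these neighbourhoods shrink homotopically onto $f^{-1}(y)$ rather than merely onto a discrete set; and contractibility of the fibre contracts the limiting neighbourhood to a point. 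Choosing $U$ open with $y\in U$ and $f^{-1}(U)\subseteq W_i$ for $i$ large, guaranteed by properness, yields the required contractible $f^{-1}(U)$.

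Granted this lemma, a standard subdivision argument finishes the proof. Given $\phi\colon \SS^n\to Y$, cover $\phi(\SS^n)$ by finitely many open $U_\alpha$ whose preimages, and (after passing to a nerve-style refinement) all finite intersections of preimages, are contractible. Triangulate $\SS^n$ finely enough that every simplex $\sigma$ maps into some $U_{\alpha(\sigma)}$. Inductively build $\Tilde\phi\colon \SS^n\to X$ skeleton by skeleton: on the $0$-skeleton pick arbitrary preimages; across a $k$-simplex $\sigma$ the already-defined boundary $\Tilde\phi(\partial\sigma)$ lies in the contractible set $f^{-1}(U_{\alpha(\sigma)})$ and extends over $\sigma$. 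Applying the same recipe to $\SS^n\times[0,1]$ promotes homotopies in $Y$ to homotopies in $X$, so $f_*$ is both surjective and injective on $\pi_n$.

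The main obstacle is the neighbourhood lemma. Local contractibility of $X$ as stated is a pointwise condition, but the argument requires contracting a whole open neighbourhood of a compact fibre simultaneously; without properness and without local contractibility of the fibres, one controls neither the spread of the candidate contracting homotopies in the vertical direction nor their behaviour along $f^{-1}(y)$. Properness is what keeps this under control: it ensures $f^{-1}(y)$ is compact and that suitably small neighbourhoods of $f^{-1}(y)$ in $X$ are preimages of neighbourhoods of $y$. Once the lemma is in hand, the skeletal lifting argument is a homotopy-level refinement of the Vietoris--Begle theorem on $H_*$-equivalences, and it is exactly here that the local contractibility of the fibres enters, going beyond mere homological triviality.
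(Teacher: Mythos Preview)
The paper does not prove this theorem at all: it is quoted verbatim as a result of Smale~\cite{smale1957} and then applied. So there is no ``paper's own proof'' to compare against; your proposal is an attempt to reprove Smale's Vietoris mapping theorem from scratch.

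On the substance of your attempt: the overall architecture---lift spheres and homotopies skeleton by skeleton over a fine cover whose preimages are homotopically trivial---is indeed the shape of Smale's argument. The genuine gap is your ``neighbourhood lemma''. You assert that one can find an open $U\ni y$ with $f^{-1}(U)$ actually \emph{contractible}, and your justification is a nested sequence $W_1\supseteq W_2\supseteq\cdots$ in which each inclusion $W_{i+1}\hookrightarrow W_i$ is null-homotopic. But null-homotopic inclusions do not make any individual $W_i$ (or any $f^{-1}(U)\subseteq W_i$) contractible; they only say that maps into the smaller set become trivial in the larger one. Your concluding step, ``choose $U$ with $f^{-1}(U)\subseteq W_i$ for $i$ large'', therefore does not deliver what you claim.

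What Smale actually proves and uses is precisely the weaker nesting statement: for each $y$ and each neighbourhood $W$ of $f^{-1}(y)$ there is a smaller neighbourhood $W'$ with $W'\hookrightarrow W$ null-homotopic (more precisely, with all $\pi_k$ killed up to the dimension one needs). The skeletal lifting must then be run with a \emph{pair} of covers $\{U'_\alpha\subseteq U_\alpha\}$: one triangulates so that each simplex maps into some $U'_\alpha$, the inductively defined $\Tilde\phi(\partial\sigma)$ lies in $f^{-1}(U'_{\alpha(\sigma)})$, and the extension over $\sigma$ takes place in the larger $f^{-1}(U_{\alpha(\sigma)})$. Your single-cover version, with genuinely contractible preimages, would short-circuit this, but it is not available under the stated hypotheses.
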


We use Theorem~\ref{thm:smale} to deduce the following, which may be
of independent interest.

\begin{theorem}\label{thm:closedcadcellcontractible}
Suppose $\cP$ is a c.a.d.\ of $\RR^n$ and the induced c.a.d.\
$\cP_{n-1}$ of $\RR^{n-1}$ is strong. Then the closure $\Bar{C}$ of
any bounded cell $C$ of $\cP$ is contractible.
\end{theorem}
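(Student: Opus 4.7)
The plan is to induct on $n$ and apply Smale's theorem (Theorem~\ref{thm:smale}) to the projection $\pr_n\colon\Bar{C}\to\Bar{D}$, where $D=\pr_n(C)\in\cP_{n-1}$. The base case $n=1$ is immediate, since a bounded cell of a c.a.d.\ of $\RR$ is either an open interval or a point, with contractible closure.

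For the inductive step one first checks that $(\cP_{n-1})_{n-2}=\cP_{n-2}$ is also strong. This is a consequence of the cylindrical structure: for a bounded sector $B\in\cP_{n-1}$ over $D'\in\cP_{n-2}$, the codimension-one cells of $\cP_{n-1}$ covering $\partial_c B$ consist of sections having the same dimension as $D'$ (the top and bottom of $B$) together with bounded sectors over the codimension-one cells of $\partial_c D'$, and the latter exhaust $\partial_c D'$; local boundary connectedness propagates similarly. Applying the inductive hypothesis to $\cP_{n-1}$ viewed as a c.a.d.\ of $\RR^{n-1}$ then yields that $\Bar{D}$ is contractible.

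The remaining task is to check the hypotheses of Smale's theorem for $f=\pr_n|_{\Bar{C}}$. Continuity and properness (from compactness of $\Bar{C}$) are clear; $f$ is surjective since for $p\in\Bar{D}$ we can take $p_k\in D$ with $p_k\to p$ and extract a convergent subsequence from the bounded $x_n$-coordinates of corresponding points of $C$. Both $\Bar{C}$ and $\Bar{D}$ are compact semi-algebraic, hence locally compact separable metric and locally contractible by the conic structure (Proposition~\ref{relativeconic}); they are connected as closures of cells. The crux is showing that each fibre $\Bar{C}\cap\pr_n^{-1}(p)$ is a non-empty closed interval, hence contractible and locally contractible. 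For small enough $\varepsilon>0$ the set $D\cap\BB(p,\varepsilon)$ is connected (this is where the local boundary connectedness of $D$, coming from $\cP_{n-1}$ being strong, is essential), so whether $C=\Gamma(g)$ is a section or $C=\Delta_j$ is a bounded sector, $C\cap\pr_n^{-1}(\BB(p,\varepsilon))$ is connected (in the section case it is homeomorphic to $D\cap\BB(p,\varepsilon)$; in the sector case it is fibred by open intervals over a connected base). Projecting onto the $x_n$-axis gives a connected subset of $\RR$, necessarily an interval, whose closure is a closed interval; the fibre over $p$ equals the nested intersection of these closed intervals as $\varepsilon\to 0$, which is again a closed interval, and is non-empty by the boundedness of $C$.

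Smale's theorem then implies that $\pr_n|_{\Bar{C}}$ is a weak homotopy equivalence, so $\Bar{C}$ is weakly contractible. Since $\Bar{C}$ is a compact semi-algebraic set it admits a semi-algebraic triangulation and is thus homotopy equivalent to a finite CW complex, so by Whitehead's theorem it is contractible. The principal obstacle is the fibre analysis: the worry is that fibres could a priori split into multiple disconnected components, and the strong hypothesis on $\cP_{n-1}$ is exactly what rules this out.
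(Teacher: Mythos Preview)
Your approach is essentially the paper's: induct on $n$, apply Smale's theorem to $\pr_n|_{\Bar{C}}\colon\Bar{C}\to\Bar{D}$, and upgrade the resulting weak homotopy equivalence to a genuine one via Whitehead and the CW structure from semi-algebraic triangulation. Two points of divergence are worth noting. First, the paper outsources the fibre analysis to \cite[Proposition~5.2]{lazard2010}, while you give a direct nested-interval argument; your argument is correct (the identification of the fibre with $\bigcap_\varepsilon \Bar{I_\varepsilon}$ checks out, and connectedness of $C\cap\pr_n^{-1}(\BB(p,\varepsilon))$ follows from local boundary connectedness of $D$ exactly as you say). Second, you explicitly flag that the induction requires $\cP_{n-2}$ to be strong, a step the paper passes over in silence; your sketch of why strongness descends under projection is on the right track but incomplete as written---in particular, showing that the sectors among the codimension-one cells of $\partial_c B$ actually exhaust $\partial_c D'$ needs a further argument, and the claim that local boundary connectedness ``propagates similarly'' deserves its own verification.
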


\begin{proof}
As often in Section~\ref{subsect:Finvt}, let $D=\pr_n(C)\in\cP_{n-1}$.
We shall show that $\Bar{C}$ and $\Bar{D}$ have the same homotopy
type: then $\Bar{C}$ is contractible by induction on $n$. The result
is true for $n=1$, so we assume $n>1$.

As $C$ and $D$ are bounded, $\Bar{C}$ and $\Bar{D}$ are compact
semi-algebraic sets and thus, by \cite[Thm~9.4.1]{BCR}, $\Bar{C}$ and
$\Bar{D}$ each admits a CW-complex structure. By Whitehead's theorem,
it suffices to show that $\Bar{C}$ and $\Bar{D}$ are weakly homotopy
equivalent, and for that it is enough to verify that
$\pr_n|_{\Bar{C}}$ satisfies the conditions of
Theorem~\ref{thm:smale}.

The spaces $\Bar{C}$ and $\Bar{D}$ are connected as they are the
closures of connected spaces. Moreover, as they are also Hausdorff
compact metric spaces, they are locally compact and separable. It
follows immediately from the local conic structure theorem
\cite[Thm~9.3.6]{BCR} that any semi-algebraic set, in
particular $\Bar{C}$ and every fibre of $\pr_n|_{\Bar{C}}$, is locally
contractible. The map $\pr_n$ is a continuous map between a compact
space and a Hausdorff space, so it is closed and proper.

Finally, because $\cP_{n-1}$ is a strong {c.a.d.}, by
\cite[Proposition~5.2]{lazard2010} the fibres of $\pr_n|_{\Bar{C}}$
are closed segments and thus contractible.
\end{proof}

Note that in Theorem~\ref{thm:closedcadcellcontractible} we do not
need the c.a.d.\ $\cP$ to be strong. We do need $\cP_{n-1}$ to be
strong in order to apply \cite[Proposition~5.2]{lazard2010}.

\begin{question}\label{qu:cadcellcontractible}
What conditions on a c.a.d.\ $\cP$ are necessary to ensure that the
closures of its bounded cells are contractible? Could it be true for
an arbitrary~$\cP$?
\end{question}

In fact a compact contractible manifold with boundary $X$ is a
$d$-cell ($d\ge 3$) if and only if $\partial_m X$ is simply
connected. For this one uses the topological $h$-cobordism theorem,
which is a consequence of the (generalised) Poincar\'e conjecture: see
\cite{schultz2007} for some comments on this, and also
\cite[Conjecture~3.5$\vphantom{5}'$]{hempel2004}.

More precisely: $X$ is contractible so $\partial_m X$ is a homology
$(d-1)$-sphere by Lemma~\ref{lem:bdyishomologysphere}, so $\partial_m
X$ is a homotopy $(d-1)$-sphere because it is simply-connected. Then
$\partial_m X$ is cobordant with the sphere $\SS_0$ bounding a small
ball $\BB_0\subset X\setminus \partial_m X$. Because this cobordism is
an $h$-cobordism it is homeomorphic to $\SS_0\times [0,1]$, and we get
that $X$ is homeomorphic to $\BB_0\cup_{\SS_0}\SS_0\times[0,1]$ which
is $\Bar\BB^d$.

Consequently, to show that a c.a.d.\ $\cP$ of a compact
$d$-dimensional semi-algebraic set is a regular cell complex it is
enough to show that every cell closure $\Bar{C}$ is compatibly a
compact contractible manifold with boundary and, for $d> 3$, that
$\partial_c C$ is simply connected.

In general, the closure of a c.a.d.\ cell is not compatibly a manifold
with boundary. The cell $W$ in Example~\ref{ex:whitney} is an example
of this: $\Bar{W}\imic \Bar\BB^2$ but $\partial_m\Bar{W}$ is strictly
contained in $\partial_c W$. Another example is the non-regular cell
$\BB^2\setminus \{(0,y)\mid y\ge 0\}$ mentioned after
Definition~\ref{def:regularcell}.

In very low dimension the position is simple.

\begin{lemma}\label{lem:01lbcgivesmfldwbdy}
If $C$ is a locally boundary connected $0$- or $1$-cell of $\RR^n$,
then $\Bar{C}$ is compatibly a manifold with boundary.
\end{lemma}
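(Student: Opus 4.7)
The plan is to dispose of the $0$-cell case trivially (if $C = \{p\}$ then $\Bar{C} = \{p\}$, and both $\partial_c C$ and $\partial_m \Bar{C}$ are empty) and to handle the $1$-cell case using the local conic structure. Since $C$ is semi-algebraic of dimension~$1$, its boundary $\partial_c C$ is semi-algebraic of dimension at most~$0$, hence a finite set of points. Applying Proposition~\ref{relativeconic} to the pair $(\Bar{C}, C)$ at any $p \in \Bar{C}$ gives, for sufficiently small $\varepsilon > 0$, a homeomorphism of pairs sending $(\Bar{C}, C) \cap \Bar{\BB}(p, \varepsilon)$ to the cone on $(\Bar{C}, C) \cap \SS(p, \varepsilon)$ with vertex $p$; intersecting with the open ball then makes $\Bar{C} \cap \BB(p, \varepsilon)$ homeomorphic to a wedge of $k$ half-open intervals at $p$, where $k = |\SS(p, \varepsilon) \cap \Bar{C}|$. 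The task is then to pin down $k$ in each case.

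For $p \in C$, I would shrink $\varepsilon$ so that $\BB(p, \varepsilon) \cap \partial_c C = \emptyset$ (possible since $\partial_c C$ is finite and $p \notin \partial_c C$), whence $\Bar{C} \cap \BB(p, \varepsilon) = C \cap \BB(p, \varepsilon)$. Via the homeomorphism $C \cong \BB^1$ this is a connected (since it is a wedge at $p$) open subset of an open interval, hence itself an open interval with $p$ as an interior point. Counting components after deleting $p$ gives $2$ on the open-interval side and $k$ on the wedge side, so $k = 2$; thus $\Bar{C} \cap \BB(p, \varepsilon)$ is an open interval, yielding a chart of $\Bar{C}$ at $p$ to $\RR$.

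For $p \in \partial_c C$, I would shrink $\varepsilon$ so that $\BB(p, \varepsilon) \cap \partial_c C = \{p\}$, which forces every point of the link $\SS(p, \varepsilon) \cap \Bar{C}$ to lie in $C$. The relative conic structure then gives $C \cap \BB(p, \varepsilon) \cong \bigsqcup_{i=1}^{k} (p, q_i)$, a disjoint union of $k$ open arcs (the vertex $p$ is excluded because $p \notin C$). The locally boundary connected hypothesis (Definition~\ref{def:locallyboundary}) forces this set to be connected, so $k = 1$, and $\Bar{C} \cap \BB(p, \varepsilon) \cong [p, q_1)$, an open subset of $\RR_{\ge 0}$ with $p$ on its boundary. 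Combining the two cases, $\Bar{C}$ is a $1$-manifold with boundary and $\partial_m \Bar{C} = \partial_c C$, so $\Bar{C}$ is compatibly a manifold with boundary. The most delicate point is the cardinality bookkeeping at interior cell-points, which relies on semi-algebraicity (through finiteness of $\partial_c C$) rather than on the locally boundary connected hypothesis.
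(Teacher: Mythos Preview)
Your argument is correct. The paper itself gives no proof beyond declaring the result ``immediate from the definition of manifold with boundary''; you have supplied the details via the local conic structure, together with the finiteness of $\partial_c C$ (which uses semi-algebraicity --- not in the lemma's stated hypotheses, but implicit throughout the c.a.d.\ setting).

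Your route is in fact more elaborate than necessary, and the reliance on semi-algebraicity can be removed. For $p\in C$ one does not need the conic bookkeeping: $C$ is a $1$-manifold, and being locally compact it is locally closed in $\RR^n$, so $\partial_c C$ is closed and any small ball about $p$ meets only $C$. For $p\in\partial_c C$, pull back $\BB(p,\varepsilon)\cap C$ along the homeomorphism $\phi\colon(0,1)\to C$: by local boundary connectedness this preimage is a single open subinterval of $(0,1)$, and since any sequence in $C$ converging to $p$ must pull back to a sequence tending to $0$ or to $1$ (otherwise its limit would lie in $C$), the subinterval has the form $(0,b_\varepsilon)$ or $(a_\varepsilon,1)$ for small $\varepsilon$. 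Letting $\varepsilon\to 0$ shows that $\phi$ extends continuously to the relevant endpoint with value $p$; hence $|\partial_c C|\le 2$, the two ends have distinct limits (else the preimage would be all of $(0,1)$), and $\Bar C$ is an arc with $\partial_m\Bar C=\partial_c C$. This purely topological argument is presumably what the authors intend by ``immediate'', and it shows that the semi-algebraic hypothesis you introduce is not actually needed here.
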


This is immediate from the definition of manifold with boundary.
Using a more involved argument, we can show that, under certain
conditions, closures of $2$-cells are compatibly manifolds with
boundary.

\begin{definition}\label{def:homologymfld}
A locally compact space $X$ is a \emph{homology $n$-manifold} if, for
all $p \in X$
\[
H_i(X,X\setminus \{p\}) =
\begin{cases}
  \ZZ & \mbox{ if } i = n,\\
 0 & \mbox{ otherwise.}
\end{cases}
\]
\end{definition}

A homology manifold is not a manifold in general, but homology
$n$-manifolds are $n$-manifolds for $n\le 2$. In some ways, they
behave better than manifolds, as the following fact (stated
in~\cite{hazewinkel2013}, and easily checked by the K\"unneth formula)
suggests.

\begin{lemma}\label{lem:factorsofmfldarehmflds}
Suppose $X$ and $Y$ are topological spaces. If $X\times Y$ is a
topological manifold, then $X$ and $Y$ are homology manifolds.
\end{lemma}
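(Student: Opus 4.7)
The plan is to fix an arbitrary point $(p,q)\in X\times Y$ and apply the relative K\"unneth formula to the local homology of $X\times Y$ at $(p,q)$, expressing it in terms of $A_*:=H_*(X,X\setminus\{p\})$ and $B_*:=H_*(Y,Y\setminus\{q\})$; the manifold hypothesis on $X\times Y$ then forces a very rigid structure on each factor. First I would localise by choosing open neighbourhoods $p\in U\subset X$ and $q\in V\subset Y$ with $U\times V$ contained in a Euclidean chart of $X\times Y$, so that by excision the three relevant local homology groups coincide with those computed on $U$, $V$, and $U\times V$. Since the decomposition
\[
U\times V\setminus\{(p,q)\}=(U\setminus\{p\})\times V\,\cup\, U\times(V\setminus\{q\})
\]
is an excisive open cover, the relative K\"unneth formula gives a split short exact sequence
\[
0\To\bigoplus_{i+j=n}A_i\tens B_j\To H_n\bigl(U\times V,\,U\times V\setminus\{(p,q)\}\bigr)\To\bigoplus_{i+j=n-1}\Tor(A_i,B_j)\To 0,
\]
whose middle term equals $\ZZ$ in degree $n=m$ and vanishes for $n\ne m$, because $X\times Y$ is a topological $m$-manifold.

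Next, working over $\QQ$ annihilates the $\Tor$-terms, and K\"unneth reduces to a clean tensor product of graded $\QQ$-vector spaces. The resulting identity $P_A(t)\,P_B(t)=t^m$ on Poincar\'e polynomials with non-negative integer coefficients forces each factor to be a single monomial with coefficient one, so there are unique $n_X,n_Y\ge 0$ with $n_X+n_Y=m$ such that $A_{n_X}\tens\QQ\imic\QQ$ and $B_{n_Y}\tens\QQ\imic\QQ$, while all other rationalisations vanish.

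The main obstacle is then to upgrade this to the integral conclusion $A_{n_X}\imic\ZZ$, $A_i=0$ for $i\ne n_X$, and symmetrically for $B_*$. In degree $n=m$ the K\"unneth sequence embeds $K:=\bigoplus_{i+j=m}A_i\tens B_j$ into $\ZZ$; since $K$ contains the rank-one summand $A_{n_X}\tens B_{n_Y}$, it has rank one, and the splitting of the sequence together with the torsion-freeness of $\ZZ$ forces $K\imic\ZZ$ and $\bigoplus_{i+j=m-1}\Tor(A_i,B_j)=0$. Consequently $A_{n_X}\tens B_{n_Y}$ is a rank-one direct summand of $\ZZ$, hence isomorphic to $\ZZ$; writing $A_{n_X}=\ZZ\oplus T_X$ and $B_{n_Y}=\ZZ\oplus T_Y$ (at the cost of a mild finite-generation hypothesis satisfied by local homology of reasonable spaces) then yields $T_X=T_Y=0$, so $A_{n_X}\imic B_{n_Y}\imic\ZZ$. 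Finally, for $i\ne n_X$ the identification $A_i\imic A_i\tens B_{n_Y}$ realises $A_i$ as a summand of $\bigoplus_{k+j=i+n_Y}A_k\tens B_j=0$, forcing $A_i=0$, and symmetrically $B_j=0$ for $j\ne n_Y$. Since $(p,q)$ was arbitrary, $X$ is a homology $n_X$-manifold and $Y$ a homology $n_Y$-manifold with $n_X+n_Y=m$.
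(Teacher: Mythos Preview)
Your approach is exactly what the paper intends: it offers no proof beyond the remark that the lemma is ``easily checked by the K\"unneth formula'', and your outline is a careful execution of precisely that idea.

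Two small points deserve tightening. First, the finite-generation hypothesis you flag is unnecessary. Once you have $A_{n_X}\tens B_{n_Y}\cong\ZZ$, pick any $b\in B_{n_Y}$ for which the homomorphism $A_{n_X}\to\ZZ$, $a\mapsto a\tens b$, has nonzero image; this image is some $k\ZZ\cong\ZZ$, so the map splits and $A_{n_X}\cong K\oplus\ZZ$. Then $\ZZ\cong A_{n_X}\tens B_{n_Y}\cong(K\tens B_{n_Y})\oplus B_{n_Y}$ forces $B_{n_Y}$ to be a nonzero summand of $\ZZ$, hence $B_{n_Y}\cong\ZZ$, and then $K\cong K\tens B_{n_Y}=0$. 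No structure theorem for $A_{n_X}$ is needed. Second, you should remark that $n_X$ does not depend on the chosen point: for fixed $q$ the integer $n_Y$ is determined by $B_*=H_*(Y,Y\setminus\{q\})$ alone, so $n_X=m-n_Y$ is constant in $p$, and symmetrically in $q$. It is also worth noting that $X$ and $Y$ are locally compact (part of Definition~\ref{def:homologymfld}), since each is homeomorphic to a closed slice $X\times\{q\}$ or $\{p\}\times Y$ of the locally compact Hausdorff manifold $X\times Y$.
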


If $C$ is a semi-algebraic cell and $p\in\partial C$ then
$\BB(p,\varepsilon)\cap C$ is a open subset of a manifold and thus a
manifold. As in the proof of Proposition~\ref{prop:BretractsS}, the
local conic structure gives a homeomorphism
\[
\BB(p,\varepsilon)\cap C \to (\SS(p,\varepsilon)\cap C)\times (0,1),
\]
so $\SS(p,\varepsilon)\cap C$ is a homology manifold for
$0<\varepsilon\ll 1$.

We can determine when locally boundary connected $2$-cells of a cell
decomposition are manifolds with boundary.

\begin{proposition}\label{prop:lbc2cell+givesmfldwbdy}
Let $C$ be a locally boundary connected semi-algebraic $2$-cell in
$\RR^n$. If, for all $p \in \partial_c C$ and $0<\varepsilon\ll 1$,
the set $\SS(p,\varepsilon)\cap C$ is homeomorphic to $(0,1)$, then
$\Bar{C}$ is compatibly a manifold with boundary.
\end{proposition}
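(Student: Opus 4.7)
The plan is to analyse, for each $p\in\partial_c C$, the local structure of $\Bar{C}$ at $p$ via the conic structure of Proposition~\ref{relativeconic}. Applied to the pair $(\Bar{C},C)$ at $p$, it produces a semi-algebraic homeomorphism identifying $\Bar{\BB}(p,\varepsilon)\cap\Bar{C}$ with the closed cone (apex $p$) on the link $L:=\SS(p,\varepsilon)\cap\Bar{C}$, and sending $\Bar{\BB}(p,\varepsilon)\cap C$ to the corresponding subcone on $A:=\SS(p,\varepsilon)\cap C$. By hypothesis $A\cong(0,1)$, and comparing closures through this homeomorphism gives $L=\Bar{A}$ in $\SS(p,\varepsilon)$. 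A semi-algebraic parametrisation $(0,1)\to\SS(p,\varepsilon)$ of $A$ has bounded image, so by \cite[Proposition~2.5.3]{BCR} (as used in Lemma~\ref{lem:extendwellbased}) it extends continuously to $[0,1]$; injectivity on $(0,1)$ then forces $L$ to be homeomorphic either to $[0,1]$ or to $\SS^1$, according as the two endpoint values are distinct or coincide.

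The crux is to rule out $L\cong\SS^1$, and this is where the hypothesis must be invoked at nearby boundary points, not only at $p$. Suppose $L\cong\SS^1$. Then $\Bar{\BB}(p,\varepsilon)\cap\Bar{C}$ is a closed $2$-disk with $p$ at its centre; the single point $q_0\in L\setminus A$, together with the radial segment from $p$ to $q_0$, forms $\partial_c C\cap\Bar{\BB}(p,\varepsilon)$, because the cone structure propagates the $C/\partial_c C$ partition along radii from $p$. Choosing $p'$ in the relative interior of that segment gives $p'\in\partial_c C$; but near $p'$ the set $\Bar{C}$ is locally this $2$-disk and $\partial_c C$ locally the radial arc, so a small sphere around $p'$ meets $C$ in a circle minus two transverse crossing points --- two disjoint arcs, disconnected and certainly not homeomorphic to $(0,1)$. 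This contradicts the hypothesis applied at $p'$, so $L\cong[0,1]$.

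With $L\cong[0,1]$, the set $\Bar{\BB}(p,\varepsilon)\cap\Bar{C}$ is a $2$-simplex, hence a closed $2$-disk with $p$ on its topological boundary; accordingly $\Bar{C}$ is locally a $2$-manifold with boundary at $p$, with $p\in\partial_m\Bar{C}$ and the two slanted edges of the triangle corresponding to $\partial_c C\cap\Bar{\BB}(p,\varepsilon)$. To finish, at an interior point $p\in C$ a standard semi-algebraic argument --- using that $C$ is a semi-algebraic $2$-manifold and hence locally closed in $\RR^n$ --- produces a Euclidean neighbourhood $V$ of $p$ with $V\cap\Bar{C}=V\cap C$, a $2$-manifold without boundary. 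Combining yields $\partial_m\Bar{C}=\partial_c C$, so $\Bar{C}$ is compatibly a manifold with boundary. The principal obstacle is the $\SS^1$ exclusion in the middle step: pathologies such as a slit disk satisfy the link hypothesis at a single point but violate it at neighbouring boundary points, and uniformly imposing the hypothesis along $\partial_c C$ is precisely what the contradiction exploits.
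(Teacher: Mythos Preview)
Your argument is correct and follows essentially the same plan as the paper: use the conic structure to identify the local pair with a cone on the link, extend the parametrisation of $A\cong(0,1)$ to its closure, exclude the case $\Bar A\cong\SS^1$, and read off the manifold-with-boundary structure from the cone on $[0,1]$. The one real difference is where you locate the $\SS^1$ contradiction: the paper stays on the link sphere and observes that $\SS(p,\varepsilon)\cap C$ itself fails to be locally boundary connected at the single point $q_0=\Bar\gamma(0)=\Bar\gamma(1)$, then invokes Corollary~\ref{cor:homotopylb}; you instead move to a point $p'$ on the radial slit inside the ball and contradict the hypothesis there. These are minor variants of the same idea, and yours is arguably more direct since it applies the hypothesis to $C$ itself rather than passing through a property of the link.

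One small imprecision: your description of $\SS(p',\varepsilon')\cap C$ as ``a circle minus two transverse crossing points'' leans on the cone homeomorphism $\psi$ at $p$, but $\psi$ does not carry spheres centred at $p'$ to spheres, so this picture is not literally justified. What you can conclude rigorously is that every sufficiently small neighbourhood of $p'$ in $\Bar C$ meets $C$ in a disconnected set (this is a purely topological statement, transported by $\psi$ from the slit disk), whence by Proposition~\ref{prop:BretractsS} the set $\SS(p',\varepsilon')\cap C$ is disconnected and therefore not homeomorphic to $(0,1)$. That weaker claim is all you need, and with it your argument is complete.
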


\begin{proof}
A semi-algebraic homeomorphism $\gamma\colon(0,1)\to
\SS(p,\varepsilon)\cap C$ extends continuously to a map
$\Bar{\gamma}\colon [0,1]\to \Bar{C}$ by \cite[Proposition~2.5.3]{BCR}.  If
$\Bar\gamma(0)=\Bar\gamma(1)$ (for $\varepsilon\ll 1$) then
$\SS(p,\varepsilon)\cap C$ is not locally boundary connected, which by
Corollary~\ref{cor:homotopylb} would contradict the assumption on~$C$.

Therefore $\Bar{\gamma}(0)\neq \Bar{\gamma}(1)$, giving a
homeomorphism $\Bar{\gamma} \colon ([0,1],(0,1))\to
\SS(p,\varepsilon)\cap (\Bar{C},C)$.

Hence by the local conic structure of semi-algebraic sets,
$\Bar\BB(p,\varepsilon) \cap (\Bar{C},C)$ is homeomorphic to the cone
$K$ on $([0,1],(0,1))$, by a map sending $p$ to the vertex. This is a
manifold with boundary, and $p\in\partial_mK$, so $\Bar{C}$ is a
manifold with boundary and $\partial_cC\subseteq \partial_m\Bar{C}$.
But $C\cap\partial_m\Bar{C}=\emptyset$ because $C$ is a manifold, so
$\Bar{C}$ is compatibly a manifold with boundary.
\end{proof}

We can apply this to strong c.a.d.s.

\begin{lemma}\label{lem:strong2cellgivesmfldwbdy}
Let $C$ be a $2$-cell of a strong c.a.d.\ of $\RR^n$. Then $\Bar{C}$
is compatibly a manifold with boundary.
\end{lemma}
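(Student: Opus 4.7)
The plan is to apply Proposition~\ref{prop:lbc2cell+givesmfldwbdy}: since $\cP$ is strong, $C$ is locally boundary connected, so it suffices to verify that for every $p\in\partial_c C$ and every sufficiently small $\varepsilon>0$ the set $A:=\SS(p,\varepsilon)\cap C$ is homeomorphic to $(0,1)$.

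I would first observe that $A$ is a connected, boundaryless $1$-manifold. The $2$-cell $C$ is a $2$-manifold, so its open subset $\BB(p,\varepsilon)\cap C$ is one as well. By the local conic structure (as in the proof of Proposition~\ref{prop:BretractsS}) this open subset is homeomorphic to $A\times(0,1)$, so Lemma~\ref{lem:factorsofmfldarehmflds} forces $A$ to be a homology $1$-manifold, hence a topological $1$-manifold (since homology $n$-manifolds are manifolds for $n\le 2$). Connectedness of $A$ follows from the local boundary connectedness of $C$ via Corollary~\ref{cor:homotopylb}. Thus $A\cong(0,1)$ or $A\cong\SS^1$, and it remains to rule out the second possibility.

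Suppose for contradiction that $A\cong\SS^1$, and set $B:=\SS(p,\varepsilon)\cap\Bar C$. By Proposition~\ref{relativeconic} there is a homeomorphism $\psi$ of $\Bar\BB(p,\varepsilon)$, preserving distances to $p$, that identifies $\Bar C\cap\Bar\BB(p,\varepsilon)$ with the cone on $B$ with vertex $p$ and $C\cap\Bar\BB(p,\varepsilon)$ with the cone on $A$ (without its vertex, since $p\notin C$). The key step is to show $B=A$. If some $b_0\in B\setminus A$ existed then compactness of $A\cong\SS^1$ would yield $d(b_0,A)>0$ in $\SS(p,\varepsilon)$, and the point $q_0=\tfrac12 p+\tfrac12 b_0$ of the cone on $B$ would admit a small Euclidean neighbourhood inside $\Bar\BB(p,\varepsilon)$ disjoint from the cone on $A$; pulling this back by $\psi$ gives a neighbourhood of $\psi^{-1}(q_0)\in\Bar C$ disjoint from $C$, contradicting $\psi^{-1}(q_0)\in\Bar C$. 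Given $B=A$, the conic identification forces $\partial_c C\cap\Bar\BB(p,\varepsilon)=\{p\}$, so $p$ is isolated in $\partial_c C$. But $\cP$ is well-bordered, so $p\in\Bar{C_i}$ for some $1$-cell $C_i\subseteq\partial_c C$, and as $p$ is a limit point of the $1$-dimensional set $C_i$, points of $\partial_c C$ other than $p$ lie arbitrarily close to~$p$. This contradicts the isolation of $p$ and so rules out $A\cong\SS^1$; the lemma follows from Proposition~\ref{prop:lbc2cell+givesmfldwbdy}.

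The step I expect to be the main obstacle is $B=A$: the naive identity $\Bar C\cap\Bar\BB(p,\varepsilon)=\overline{C\cap\Bar\BB(p,\varepsilon)}$ can fail at tangential sphere contacts, but the argument above avoids this by working at a strictly interior point $q_0$ of the cone, so that Euclidean balls around $\psi^{-1}(q_0)$ remain inside $\Bar\BB(p,\varepsilon)$ and the conic description is faithful there.
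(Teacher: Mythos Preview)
Your proof is correct and follows essentially the same route as the paper's: reduce to Proposition~\ref{prop:lbc2cell+givesmfldwbdy}, note that $A=\SS(p,\varepsilon)\cap C$ is a connected $1$-manifold, and rule out $A\cong\SS^1$ by showing this would make $p$ isolated in $\partial_c C$, contradicting well-borderedness. The paper compresses the isolation step into a single sentence (``then $\BB(p,\varepsilon)\cap\Bar C$ is the cone with vertex $p$ on $\SS^1$, but that has an isolated boundary point at $p$''), whereas you spell out via Proposition~\ref{relativeconic} and a compactness argument why $B=A$ and hence why no other boundary points survive in the ball; your extra care here is justified and the argument you give is sound.
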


\begin{proof}
By the previous discussion, $\SS(p,\varepsilon)\cap C$ is a connected
$1$-manifold so it is homeomorphic to either $\SS^1$ or $(0,1)$.  As a
strong c.a.d.\ is well-bordered, there exists a $1$-cell $C' \subset
\partial C$, such that $p \in \Bar{C'}$. If $\SS(p,\varepsilon)\cap C$
is homeomorphic to $\SS^1$, then $\BB(p,\varepsilon)\cap \Bar{C}$ is
the cone with vertex $p$ on $\SS^1$, but that has an isolated boundary
point at $p$.
\end{proof}

We have proved the following result.

\begin{corollary}\label{cor:2strongcadreg}
Let $S \subset \RR^n$ be a $2$-dimensional compact semi-algebraic
set. If $\cP$ is a strong c.a.d.\ adapted to $S$, then $\cP$
represents $S$ as a regular cell complex.
\end{corollary}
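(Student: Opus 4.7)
The plan is to verify cell-by-cell that every $C\in\cP$ with $C\subset S$ is a regular cell, using the tools developed earlier in this subsection. Since $\dim S = 2$, each such $C$ has dimension $d\in\{0,1,2\}$, and the argument splits naturally by dimension.

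The cases $d=0$ and $d=1$ are nearly immediate. A $0$-cell is a point, trivially regular. For a $1$-cell $C$, strongness of $\cP$ supplies local boundary connectedness, so Lemma~\ref{lem:01lbcgivesmfldwbdy} yields that $\Bar{C}$ is compatibly a $1$-manifold with boundary; since $\partial_c C\neq\emptyset$, compatibility forces $\partial_m \Bar C \neq \emptyset$, and the classification of compact connected $1$-manifolds with boundary gives $\Bar C\imic [0,1]$ with $C$ corresponding to the open interval. For $d=2$, Lemma~\ref{lem:strong2cellgivesmfldwbdy} supplies that $\Bar C$ is compatibly a $2$-manifold with boundary. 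I would then invoke Theorem~\ref{thm:closedcadcellcontractible} to conclude that $\Bar C$ is contractible, after first checking that the induced c.a.d.\ $\cP_{n-1}$ inherits strongness from $\cP$ so that the hypothesis of that theorem is satisfied. Lemma~\ref{lem:bdyishomologysphere} then makes $\partial_m \Bar C$ a homology $1$-sphere, hence homeomorphic to $\SS^1$; the Schoenflies theorem (or equivalently the classification of compact connected surfaces with boundary, where the disk is the only contractible member) gives $\Bar C \imic \Bar\BB^2$, and compatibility of the manifold structure ensures that any such homeomorphism sends $\SS^1$ to $\partial_c C$ and $\BB^2$ to $C$, so $C$ is a regular cell.

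The main obstacle is really the transition from \emph{abstract} manifold-with-boundary data on $\Bar C$ to the regularity of $C$ itself: the notion of compatibility in Definition~\ref{def:mfldwbdy} is designed precisely to bridge this gap, and Lemma~\ref{lem:strong2cellgivesmfldwbdy} is the crucial input that places the cell-theoretic boundary $\partial_c C$ and the manifold boundary $\partial_m \Bar C$ on the same footing. The reason this low-dimensional case is much easier than the $n=3$ case is that the classification of compact $2$-manifolds (or equivalently Schoenflies) plays the role that the topological $h$-cobordism theorem would otherwise be needed to play in turning a compact contractible manifold with simply-connected boundary into a cell.
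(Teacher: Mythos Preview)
Your proof is correct and follows the paper's (implicit) approach: the corollary is stated as an immediate consequence of what precedes it, namely Lemmas~\ref{lem:01lbcgivesmfldwbdy} and~\ref{lem:strong2cellgivesmfldwbdy} for the manifold-with-boundary structure, Theorem~\ref{thm:closedcadcellcontractible} for contractibility of $\Bar C$, and the discussion after Question~\ref{qu:cadcellcontractible} to conclude regularity. One remark on the point you flag: the detour through Theorem~\ref{thm:closedcadcellcontractible} (and hence the need to check that $\cP_{n-1}$ inherits strongness) is actually avoidable in this dimension, since once $\Bar C$ is compatibly a $2$-manifold with boundary its manifold interior is exactly $C\cong\BB^2$, and the classification of compact surfaces with boundary already forces $\Bar C\cong\Bar\BB^2$ without a separate appeal to contractibility of $\Bar C$.
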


To prove Conjecture~\ref{conj:strongcadisreg} for $n=3$ by this
method, we would need to show that $3$-cells of a strong c.a.d.\ of
$\RR^3$ are manifolds with boundary. We have not been able to do this
but we can do so under the additional assumption that the cells of the
c.a.d.\ are locally boundary simply connected: see
Definition~\ref{def:locallyboundary}.

\begin{theorem}\label{thm:vstrongcadismfldwbdy}
Let $\cP$ be a strong c.a.d.\ of $\RR^3$. If $C$ is a locally boundary
simply connected $3$-cell of $\cP$, then $\Bar{C}$ is compatibly a
manifold with boundary.
\end{theorem}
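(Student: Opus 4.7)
The plan is to show that each $p \in \partial_c C$ has a neighbourhood in $\Bar{C}$ homeomorphic to a closed half-ball with $p$ on the flat face, so that $\partial_m \Bar{C} = \partial_c C$. By the relative conic structure (Proposition~\ref{relativeconic}), for $0 < \varepsilon \ll 1$ the pair
$\big(\Bar{\BB}(p,\varepsilon)\cap\Bar{C},\,\Bar{\BB}(p,\varepsilon)\cap C\big)$
is semi-algebraically homeomorphic to the cone on $(\Sigma,U)$, where $\Sigma := \SS(p,\varepsilon)\cap\Bar{C}$ and $U := \SS(p,\varepsilon)\cap C$, with $p$ mapping to the vertex. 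The cone on $\Bar{\BB}^2$ is $\Bar{\BB}^3$ with the vertex landing on $\partial_m\Bar{\BB}^3$, so it suffices to show $(\Sigma,U)\imic(\Bar{\BB}^2,\BB^2)$.

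First I would analyse $U$. Since $C$ is a $3$-cell in a c.a.d.\ of $\RR^3$, invariance of domain makes $C$ open in $\RR^3$, so $U$ is open in $\SS(p,\varepsilon)\imic\SS^2$. Strongness of $\cP$ yields local boundary connectedness, and the hypothesis gives local boundary simple connectedness, so by Corollary~\ref{cor:homotopylb} (after possibly shrinking $\varepsilon$) $U$ is a connected, simply connected open $2$-manifold, hence homeomorphic to either $\BB^2$ or $\SS^2$. The case $U\imic\SS^2$ is ruled out using the cylindrical structure: it would force $\Bar{\BB}(p,\varepsilon)\setminus\{p\}\subseteq C$, so the cell $D_p\in\cP$ containing $p$ must equal $\{p\}$ (a $0$-cell) and no other cell of $\cP$ can meet $\BB(p,\varepsilon)$; but then the sectors immediately above and below $p$ in the cylinder over the $0$-cell $\pr_3(p)\in\cP_2$ would have to be absent, a contradiction. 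Thus $U\imic\BB^2$, and the same conic homeomorphism identifies $\Sigma$ with the closure $\Bar{U}$ in $\SS^2$.

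What remains is to prove that $\Bar{U}\imic\Bar{\BB}^2$. The well-borderedness of $\cP$ gives $\partial_c C=\bigcup_i\Bar{C_i}$ for finitely many $2$-cells $C_i\in\cP$, and Lemma~\ref{lem:strong2cellgivesmfldwbdy} ensures each $\Bar{C_i}$ is compatibly a $2$-manifold with boundary. Hence $\partial\Sigma=\Sigma\setminus U=\bigcup_i\big(\SS(p,\varepsilon)\cap\Bar{C_i}\big)$ is a compact, $1$-dimensional semi-algebraic subset of $\SS^2$ which, for generic small $\varepsilon$ and after semi-algebraic triangulation, is a finite graph embedded in $\SS^2$ whose edges come from the $C_i$ and whose vertices come from the $1$- and $0$-cells of $\cP$ that lie in $\partial_c C$ near $p$.

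The main obstacle will be proving that this graph is in fact a single topological circle, rather than a disjoint union of several circles or a graph containing a vertex of valence $\ne 2$. Here I expect to need local boundary simple connectedness of $C$ directly: any such graph-theoretic pathology would force $U$ either to be disconnected or to carry a non-trivial loop, contradicting the conclusions of the second paragraph. Once $\partial\Sigma$ is shown to be a Jordan curve on $\SS^2$ bounding the open disc $U$, the Schoenflies theorem (or equivalently the classification of compact simply connected $2$-manifolds with boundary) yields $(\Sigma,U)\imic(\Bar{\BB}^2,\BB^2)$, and the proof is complete. The delicate case analysis of how the arcs $\SS(p,\varepsilon)\cap\Bar{C_i}$ glue together at the endpoints of the various $\Bar{C_i}$, exploiting the particular rigidity of cylindrical decompositions in $\RR^3$, is what I expect to be the hardest step.
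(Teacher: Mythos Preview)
Your overall strategy matches the paper's: analyse the link $(\Sigma,U)$ at each $p\in\partial_c C$ and show it is $(\Bar{\BB}^2,\BB^2)$, so the cone gives a half-ball neighbourhood. Your route to $U\cong\BB^2$ is in fact tidier than the paper's --- invariance of domain makes $U$ open in $\SS^2$ directly, whereas the paper goes via homology manifolds (Lemma~\ref{lem:factorsofmfldarehmflds}) and then Hurewicz--Whitehead to obtain contractibility. Your exclusion of $U\cong\SS^2$ via the cylinder structure works but is heavier than needed: the paper just observes that well-borderedness prevents $p$ from being isolated in $\partial_c C$, exactly as in Lemma~\ref{lem:strong2cellgivesmfldwbdy}.

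There is, however, a genuine gap in your final step. You assert that a vertex of valence $\ne 2$ (or multiple boundary components) would force $U$ to be disconnected or to carry a nontrivial loop, but this conflates local with global behaviour: a connected, simply connected open $U\subsetneq\SS^2$ can have non-Jordan boundary --- for instance $U=\SS^2\setminus(\text{closed arc})$ has $\partial U$ an arc with two valence-$1$ endpoints, yet $U$ is globally simply connected. What actually excludes such configurations is not the global topology of $U$ but the hypotheses on $C$ applied at each $q\in\partial_c U\subset\partial_c C$: locally-boundary-connectedness and locally-boundary-simple-connectedness of $C$ at $q$ (not at $p$) force $U\cap\SS(q,\varepsilon')\cong(0,1)$ rather than $\SS^1$ or a disjoint union of arcs. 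The paper isolates precisely this implication as Proposition~\ref{prop:lbc2cell+givesmfldwbdy}, verifies its hypothesis via a second use of Corollary~\ref{cor:homotopylb}, and concludes that $\Bar U$ is compatibly a $2$-manifold with boundary; your Schoenflies/classification step is then immediate. So the ``delicate case analysis of how the arcs glue'' that you anticipate is replaced by a single local lemma, and the locally-boundary-simply-connected hypothesis is consumed at \emph{every} boundary point of the link, not only at $p$.
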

\begin{proof}
For $p \in \partial C$ and $0<\varepsilon \ll 1$, we know that
$\SS(p,\varepsilon)\cap C$ is a homology $2$-manifold and therefore a
$2$-manifold.

We claim that $\SS(p,\varepsilon)\cap C$ is a $2$-cell that satisfies
the conditions of Proposition~\ref{prop:lbc2cell+givesmfldwbdy}. Then,
as before, $\SS(p,\varepsilon)\cap C$ is a regular cell and the
interior of the cone on its closure is homeomorphic to $[0,1)^3$,
which is compatibly a manifold with boundary.

As in the proof of Proposition~\ref{prop:lbc2cell+givesmfldwbdy},
$\SS(p,\varepsilon)\cap C$ is locally boundary connected because of
Corollary~\ref{cor:homotopylb}.

If $q \in \partial_c(\SS(p,\varepsilon)\cap C)$, then
$(\SS(p,\varepsilon)\cap C)\cap \SS(q,\varepsilon')$ is a connected
manifold. If it is homeomorphic to $\SS^1$ then
$(\SS(p,\varepsilon)\cap C)$ is not locally boundary $1$-connected so
$C$ is also not locally boundary $1$-connected, again by
Corollary~\ref{cor:homotopylb}.

Lastly, we prove that $\SS(p,\varepsilon)\cap C$ is a cell: as the
dimension is~$2$, it is enough to show that it is contractible.  It is
a metrisable manifold, so by \cite[Corollary~1]{milnor1959} it has the
homotopy type of a CW complex.  Thus, by Whitehead's theorem, it
suffices to show that $\SS(p,\varepsilon)\cap C$ has trivial homotopy
groups: $\pi_0$ and $\pi_1$ are trivial by assumption. By Hurewicz's
theorem it is enough to show that all the homology groups
vanish. Certainly $H_i(\SS(p,\varepsilon)\cap C)=0$ for all $i\geq 3$,
as $\SS(p,\varepsilon)\cap C$ is a $2$-manifold.

But $\SS(p,\varepsilon)\cap C$ is a non-compact connected
$2$-manifold, otherwise $p$ is an isolated point of $\partial_c C$
exactly as in Lemma~\ref{lem:strong2cellgivesmfldwbdy}, so
$H_2(\SS(p,\varepsilon)\cap C) = 0$ by \cite[Cor~VIII.3.4]{dold1995}.
\end{proof}

\begin{theorem}\label{thm:vstrongcadisreg}
Suppose that $S \subset \RR^3$ is semi-algebraic and $\cP$ is a strong
c.a.d.\ adapted to $S$, such that every $3$-cell of $\cP$ is locally
boundary simply connected. Then $\cP$ yields a regular cell complex of
$S$.
\end{theorem}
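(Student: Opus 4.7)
The plan is to verify, by induction on $d=\dim C$, that the closure of every cell $C\in\cP$ contained in $S$ is a regular $d$-cell; combined with the closure finiteness of $\cP$ (which follows from strongness by Corollary~\ref{cor:wbimpliescf}), this exhibits the decomposition of $S$ as a regular cell complex. The cases $d=0$ and $d=1$ are immediate: strongness includes local boundary connectedness, so by Lemma~\ref{lem:01lbcgivesmfldwbdy} $\Bar C$ is compatibly a manifold with boundary, and a compact connected $1$-manifold with boundary is a closed interval. For $d=2$, Lemma~\ref{lem:strong2cellgivesmfldwbdy} gives that $\Bar C$ is compatibly a manifold with boundary, Theorem~\ref{thm:closedcadcellcontractible} (applied with $\cP_1$, which is automatically strong) gives that $\Bar C$ is contractible, and the classification of compact surfaces with boundary then identifies $(\Bar C,C)$ with $(\Bar\BB^2,\BB^2)$; this is in any case exactly Corollary~\ref{cor:2strongcadreg}.

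The main case is $d=3$. By hypothesis $C$ is locally boundary simply connected and $\cP$ is strong, so Theorem~\ref{thm:vstrongcadismfldwbdy} ensures that $\Bar C$ is compatibly a compact $3$-manifold with boundary. Because the induced $\cP_2$ is strong, Theorem~\ref{thm:closedcadcellcontractible} shows $\Bar C$ is contractible; then Lemma~\ref{lem:bdyishomologysphere} identifies $\partial_c C=\partial_m\Bar C$ as a closed $2$-manifold with the homology of $\SS^2$. A closed homology $2$-sphere is connected and has Euler characteristic $2$, so the classification of closed surfaces forces $\partial_c C\cong \SS^2$; in particular $\partial_c C$ is simply connected. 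The $h$-cobordism argument sketched in the text immediately after Question~\ref{qu:cadcellcontractible} now applies verbatim and yields a homeomorphism $(\Bar C,\partial_c C)\to(\Bar\BB^3,\SS^2)$, so $C$ is a regular $3$-cell, completing the induction.

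The hard part of the argument is already absorbed into Theorem~\ref{thm:vstrongcadismfldwbdy}: the genuinely delicate step is the upgrade from the purely local conditions (\emph{locally boundary connected} and \emph{locally boundary simply connected}) to the global statement that $\Bar C$ is compatibly a manifold with boundary, which is what makes the topological toolkit available. Once that is in hand, the passage from a contractible compact $3$-manifold with boundary to $\Bar\BB^3$ is standard: Lefschetz duality controls the homology of the boundary, the classification of surfaces pins down its homeomorphism type, and the topological $h$-cobordism theorem (equivalently, the Poincar\'e conjecture for $\SS^3$) does the rest. The same strategy would in principle handle $n\ge 4$, provided one could show both that a strong, locally boundary simply connected cell has closure compatibly a manifold with boundary, and that (for $d\ge 4$) the resulting $\partial_c C$ is simply connected.
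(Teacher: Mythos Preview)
Your proof is correct and follows the same route as the paper: the paper's own proof is the one-line ``This follows from Theorem~\ref{thm:vstrongcadismfldwbdy} and the discussion after Question~\ref{qu:cadcellcontractible}'', and you have simply unpacked that discussion, making explicit the chain (compatibly manifold with boundary) $\Rightarrow$ (contractible, via Theorem~\ref{thm:closedcadcellcontractible}) $\Rightarrow$ ($\partial_c C$ is a homology $2$-sphere, via Lemma~\ref{lem:bdyishomologysphere}) $\Rightarrow$ ($\partial_c C\cong\SS^2$, via the classification of surfaces) $\Rightarrow$ ($\Bar C\cong\Bar\BB^3$, via $h$-cobordism).

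Two small remarks. First, in the $d=2$ paragraph you invoke Theorem~\ref{thm:closedcadcellcontractible} ``applied with $\cP_1$'': since $\cP$ is a c.a.d.\ of $\RR^3$, the hypothesis needed there is that $\cP_2$ (not $\cP_1$) is strong; you correctly say $\cP_2$ in the $d=3$ case. Second, both you and the paper tacitly use that strongness of $\cP$ implies strongness of the induced $\cP_2$; this is standard (it is in Lazard's framework~\cite{lazard2010}) but is worth flagging since Theorem~\ref{thm:closedcadcellcontractible} genuinely needs it.
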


\begin{proof}
This follows from Theorem~\ref{thm:vstrongcadismfldwbdy} and the
discussion after Question~\ref{qu:cadcellcontractible}.
\end{proof}

In the light of the above arguments, we consider it likely that
Conjecture~\ref{conj:strongcadisreg} in full generality requires that
the cells of the c.a.d.\ should be assumed to be locally boundary
contractible, not just locally boundary connected.

Extending our approach, even with that stronger hypothesis, to higher
dimension, we encounter at least two difficulties. First, if
$[0,1]\times M$ is a $3$-manifold then $M$ is a $2$-manifold, and we
used this in the proof of Theorem~\ref{thm:vstrongcadismfldwbdy}, but
the corresponding assertion in higher dimension is false. Second, we
also used the fact that a contractible $2$-manifold is a cell, which
also fails in higher dimension: it would be enough to show that in
addition the boundary is a sphere, the main obstacle being to
determine whether the boundary is simply connected.

We would encounter these difficulties, for instance, if we tried to
prove a version of Proposition~\ref{prop:lbc2cell+givesmfldwbdy}
(e.g.\ with $C$ being locally boundary contractible and
$\SS(p,\varepsilon) \cap C$ a $(d-1)$-cell), but we could also try to
exploit the fact that $C$ is a c.a.d.\ cell, rather than just
semi-algebraic.

\section{Subadjacency and order complex}\label{sect:ordercomplex}

In \cite{lazard2010}, Lazard poses the following question (as
rephrased, but not altered, by us).

\begin{question}\label{qu:lazardunrevised}
Let $\cP$ be a strong sampled c.a.d.\ adapted to a compact
semi-algebraic set $S$ and let $\cE$ be the set of cells of $\cP$
contained in $S$. For each subadjacency chain $\bE=(E_0\prec
E_1\prec\dots\prec E_k)$ with $E_j\in\cE$, we let $\sigma_\bE$ be the
convex hull of the sample points $b_{E_0},\dots,b_{E_k}$.
\begin{enumerate}
\item[(i)] Is $\{\sigma_\bE\}$ a simplicial complex?
\item[(ii)] Is $\bigcup_\bE\sigma_\bE$ homeomorphic to $S$?
\end{enumerate}
\end{question}
Some general position condition on the sample points is needed,
otherwise $\sigma_\bE$ could even fail to be a $k$-simplex. Even so,
the answer to (i) as posed above is no, because there may be
intersections, as the example below illustrates.
\begin{center}
\begin{tikzpicture}[scale=2]
\draw (0,0) to [out=70, in=180] (0.3,0.3) to [out=0, in=110] (0.6, -0.5)
to [out=290, in=220](1,0) to [out=265, in=330] (0.5, -1) to [out=150,
  in=300] (0,0);
\draw[green] (0.9, -0.4)--(0,0)--(0.6,-0.5)--cycle;
\draw[green] (0.9, -0.4)--(1,0)--(0.6,-0.5)--cycle;
\draw[green] (0.9, -0.4)--(0,0)--(0.5,-1)--cycle;
\draw[green] (0.9, -0.4)--(1,0)--(0.5,-1)--cycle;
\draw[red, fill=red](0,0) circle [radius = 0.025];
\draw[red, fill=red](1,0) circle [radius = 0.025];
\draw[red, fill=red](0.9,-0.4) circle [radius = 0.025];
\draw[red, fill=red](0.6,-0.5) circle [radius = 0.025];
\draw[red, fill=red](0.5,-1) circle [radius = 0.025];
\node[below] at (0.5,-1.3){\rm Question~\ref{qu:lazardunrevised}(i):
  not a simplicial complex};
\end{tikzpicture}
\end{center}
However, this is easily corrected: instead of working inside $\RR^n$
one should replace $\{\sigma_\bE\}$ by the order complex $\Delta(\cE)$
of the poset $(\cE,\preceq)$ (it follows immediately from
Lemma~\ref{lem:cfbdysubadj} that $\preceq$ is transitive) and
$\bigcup_\bE\sigma_\bE$ by the geometric realisation
$\|\Delta(\cE)\|$. See \cite{bjorner1996} for details of order
complexes and barycentric subdivision.

\begin{question}\label{qu:lazardrevised}
Let $\cP$ be a strong c.a.d.\ adapted to a compact semi-algebraic set
$S$ and let $\cE$ be the set of cells of $\cP$ contained in $S$. Is
$\|\Delta(\cE)\|$ homeomorphic to $S$, i.e.\ is $\Delta(\cE)$ a
triangulation of~$S$?
\end{question}

A regular cell complex can be thought of as a CW-complex that is one
barycentric subdivision from being a triangulation.

\begin{lemma}\label{lem:baryofregissimplicial}
Let $\Sigma$ be a regular cell complex and let $\Sigma^*$ be the set
of all closed cells ordered by inclusion. Then $\| \Sigma \| \cong
\|\Delta(\Sigma^*) \|$.
\end{lemma}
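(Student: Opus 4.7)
The plan is to induct on the dimension of cells, constructing for each $\Bar{C}\in\Sigma^*$ a homeomorphism $h_{\Bar{C}}\colon \|\Delta(\Sigma^*_{\le\Bar{C}})\|\to\Bar{C}$, where $\Sigma^*_{\le\Bar{C}}:=\{\Bar{D}\in\Sigma^*\mid \Bar{D}\subseteq \Bar{C}\}$, doing so compatibly in the sense that $h_{\Bar{C}}$ restricted to the subcomplex $\|\Delta(\Sigma^*_{\le\Bar{D}})\|$ equals $h_{\Bar{D}}$ whenever $\Bar{D}\subsetneq\Bar{C}$. The desired homeomorphism $\|\Delta(\Sigma^*)\|\cong \|\Sigma\|$ is then obtained by assembling the $h_{\Bar{C}}$ over all cells; continuity in both directions follows because the restriction to each closed simplex (respectively each closed cell) is a homeomorphism of compact Hausdorff spaces, combined with the CW characterisation of continuity.

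For the base case $\dim C=0$ both sides are single points. For the inductive step with $\dim C=d>0$, closure finiteness of $\Sigma$ (built into the notion of a regular CW complex) gives $\partial_c C=\bigcup_i\Bar{D}_i$ for finitely many lower-dimensional cells $D_i$; since the cells of $\Sigma$ are disjoint, every $\Bar{D}\subsetneq\Bar{C}$ lies in $\partial_c C$, so $\Sigma^*_{\le\Bar{C}}\setminus\{\Bar{C}\}=\Sigma^*_{\le\partial_c C}$. Gluing the $h_{\Bar{D}_i}$ using the inductive compatibility produces a homeomorphism $h_\partial\colon \|\Delta(\Sigma^*_{\le\partial_c C})\|\to\partial_c C$. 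Because $\Bar{C}$ is the maximum of $\Sigma^*_{\le\Bar{C}}$, the order complex $\Delta(\Sigma^*_{\le\Bar{C}})$ is the simplicial cone on $\Delta(\Sigma^*_{\le\partial_c C})$ with apex the vertex $\Bar{C}$. Because $C$ is a regular cell there is a homeomorphism $\alpha\colon\Bar{C}\to\Bar{\BB}^d$ carrying $\partial_c C$ onto $\SS^{d-1}$, and hence $\Bar{C}$ is itself homeomorphic to the topological cone on $\partial_c C$. I then extend $h_\partial$ to the cone by the Alexander-type formula
\[
h_{\Bar{C}}([x,t])=\alpha^{-1}\bigl(t\,\alpha(h_\partial(x))\bigr),
\]
where $[x,t]$ denotes the point of the cone with base coordinate $x\in\|\Delta(\Sigma^*_{\le\partial_c C})\|$ and cone parameter $t\in[0,1]$; this is a homeomorphism and agrees with $h_\partial$ on the base $t=1$.

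The main technical point is this extension step, in which regularity is used essentially: it is precisely the hypothesis $\Bar{C}\cong\Bar{\BB}^d$ that lets us exhibit $\Bar{C}$ as a cone on $\partial_c C$ and thereby extend any boundary homeomorphism by coning. Once this is arranged, the inductive compatibility passed on to larger cells is automatic, since the Alexander extension restricts to $h_\partial$ by construction. The only other ingredient needed is the standard observation that $\Delta(P)$ is the simplicial cone on $\Delta(P\setminus\{m\})$ whenever $P$ is a finite poset with maximum $m$.
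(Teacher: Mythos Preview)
Your argument is correct and is essentially the standard barycentric-subdivision proof for regular CW complexes. The paper, however, does not actually prove this lemma: it simply records it as a known fact with a pointer to \cite[12.4(ii)]{bjorner1996}. So there is nothing to compare at the level of approach---you have supplied the details that the paper deliberately omits, and the inductive coning argument you give is precisely the one underlying the cited reference.
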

For details, see \cite[12.4\ (ii)]{bjorner1996}. It is easy to see
that in general a regular cell complex is not a triangulation
(e.g.\ \cite[Example~5.4]{basuroypollack}).

\begin{theorem}\label{thm:strongcadposet}
Let $\cP$ be a strong c.a.d.\ of $\RR^n$. The partially ordered set
$(\cP,\preceq)$ of cells with respect to sub-adjacency is isomorphic
to the partially ordered set $(\cP^*,\subseteq)$ of closed cells with
respect to inclusion.
\end{theorem}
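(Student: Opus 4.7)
The plan is to exhibit the bijection $\Phi\colon (\cP,\preceq)\to(\cP^*,\subseteq)$ sending a cell $C$ to its closure $\Bar C$, and to check that $\Phi$ and $\Phi^{-1}$ are both order-preserving. Since $\cP^*=\{\Bar C\mid C\in\cP\}$ by definition, surjectivity of $\Phi$ is automatic, and the real content is injectivity together with the equivalence $C\preceq D\iff \Bar C\subseteq \Bar D$. The crucial input is that $\cP$ being strong (Definition~\ref{def:strongcad}) forces every cell to be closure finite, via well-borderedness and Corollary~\ref{cor:wbimpliescf}.

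First I would verify the forward direction of the order correspondence. If $C\preceq D$, then $D$ is closure finite, so by Lemma~\ref{lem:cfbdysubadj} applied to $D$ we have $C\subseteq \Bar D$, hence $\Bar C\subseteq \Bar D$. The reverse direction is even more elementary: if $\Bar C\subseteq \Bar D$, then $C\cap\Bar D=C\neq\emptyset$, so $C\preceq D$ straight from Definition~\ref{def:adjacent}.

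Next, I would deduce injectivity from the ``in particular'' clause of Lemma~\ref{lem:cfbdysubadj}. Suppose $\Bar C=\Bar D$; then the order equivalence just established gives $C\preceq D$ and $D\preceq C$ simultaneously. Because $\cP$ is strong, both $C$ and $D$ are closure finite, and Lemma~\ref{lem:cfbdysubadj} forces $C=D$. This shows $\Phi$ is a bijection, and combined with the previous paragraph, $\Phi$ is an isomorphism of posets.

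There is no real obstacle here: the theorem is essentially a repackaging of Lemma~\ref{lem:cfbdysubadj} once one knows strong implies closure finite. The only thing to be slightly careful about is not to confuse cell-boundary with topological boundary when invoking Lemma~\ref{lem:cfbdysubadj}, but since the lemma is stated entirely in terms of closures and subadjacency this is not an issue. In particular, no analysis of the c.a.d.\ structure (cylindricity, sections vs.\ sectors, $\bF$-invariance) is needed: the statement holds for any cell decomposition all of whose cells are closure finite, and strong c.a.d.s are a special case.
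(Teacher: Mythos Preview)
Your proof is correct and follows essentially the same approach as the paper: both use the closure map $C\mapsto\Bar C$ as the bijection and derive the order equivalence $C\preceq D\iff\Bar C\subseteq\Bar D$ directly from Lemma~\ref{lem:cfbdysubadj}, using that a strong c.a.d.\ is closure finite (Corollary~\ref{cor:wbimpliescf}). Your final remark that cylindricity plays no role is also made in the paper immediately after its proof.
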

\begin{proof}
The bijection $\cP\to\cP^*$ is given by closure, $C\mapsto
\Bar{C}$. The inverse is given by relative interior, taking a closed
cell $Z$ to its interior in its Zariski closure, or by taking $Z$ to
the cell (unique, by Lemma~\ref{lem:cfbdysubadj}) of dimension equal
to $\dim Z$ that meets $Z$. We need to show that $C \preceq D$ if and
only if $\Bar{C} \subseteq \Bar{D}$, which follows immediately from
Lemma~\ref{lem:cfbdysubadj}.
\end{proof}

Thus Conjecture~\ref{conj:strongcadisreg} would imply an affirmative
to Question~\ref{qu:lazardrevised}. Note also that in
Theorem~\ref{thm:strongcadposet} the cylindricity is not used, so
$\cP$ does not need to be a {c.a.d.}, only a semi-algebraic cell
decomposition.

\end{document}